\newcommand{\lap}{\Delta}
\newcommand{\lie}{\mathcal{L}}
\newcommand{\Id}{I}
\newcommand{\pr}{\Pi_\lap}
\newcommand{\prprime}{\Pi_{\lap'}}
\newcommand{\lef}{L\!e\!f}
\newcommand{\rel}{\mathcal{R}}
\newcommand{\aux}{\mathcal{A}}
\newcommand{\eps}{\epsilon}
\newcommand{\epseta}{\eps_\eta}
\newcommand{\ixi}{i_\xi}
\newcommand{\gb}{\rule{0.1cm}{0.1cm}}
\newcommand{\gc}{\bullet}
\DeclareMathOperator{\ad}{ad}
\newtheorem{theorem}{Theorem}[section]
\newtheorem{lemma}[theorem]{Lemma}
\newtheorem{corollary}[theorem]{Corollary}
\newtheorem{proposition}[theorem]{Proposition}
\theoremstyle{definition}
\newtheorem{definition}[theorem]{Definition}
\theoremstyle{remark}
\numberwithin{equation}{section}
\title{Hard Lefschetz theorem for Sasakian manifolds}
\author[B. Cappelletti-Montano]{Beniamino Cappelletti-Montano}
 \address{Dipartimento di Matematica e Informatica, Universit\`a degli Studi di
 Ca\-gli\-ari, Via Ospedale 72, 09124 Cagliari, Italy}
 \email{b.cappellettimontano@gmail.com}
\author[A. De Nicola]{Antonio De Nicola}
 \address{CMUC, Department of Mathematics, University of Coimbra, 3001-501 Coimbra, Portugal}
 \email{antondenicola@gmail.com}
\author[I. Yudin]{Ivan~Yudin}
 \address{CMUC, Department of Mathematics, University of Coimbra, 3001-501 Coimbra, Portugal}
 \email{yudin@mat.uc.pt}
\thanks{Research partially supported by CMUC and FCT (Portugal), through
European program COMPETE/FEDER, grants PEst-C/MAT/UI0324/2011, PTDC/MAT/099880/2008 and
MTM2009-13383 (A.D.N.),  SFRH/BPD/31788/2006 (I.Y.), Prin 2010/11
-- Variet\`a  reali e complesse: geometria, topologia e analisi armonica -- Italy (B.C.M.)}
\begin{document}

\begin{abstract}
We prove that on a compact Sasakian manifold $\left( M,\eta, g \right)$ of dimension $2n+1$, for any $0\le p\le n$ the wedge product with $\eta\wedge(d\eta)^p$ defines an isomorphism between the spaces of harmonic forms $\Omega^{n-p}_\lap\left( M \right)$ and  $\Omega^{n+p+1}_\lap\left( M \right)$. Therefore it induces an isomorphism between the de Rham cohomology spaces $H^{n-p}(M)$ and  $H^{n+p+1}(M)$. Such isomorphism is proven to be independent of the choice of a compatible Sasakian metric on a given contact manifold. As a consequence, an obstruction for a contact manifold to admit Sasakian structures is found.
\end{abstract}

\maketitle

\section{Introduction}

Sasakian manifolds, introduced by Sasaki \cite{sasaki} in 1960, can be described as an odd-dimensional counterpart of K\"{a}hler manifolds.
Starting from the 90s, a renewed interest in Sasakian geometry was stimulated by
new findings in theoretical physics (see e.g. \cite{gauntlett2004, kunduri2012,
martelli2006, martelli2008}), especially after the Maldacena conjecture
\cite{maldacena1998} on the duality between conformal field theory and
supergravity on anti-de-Sitter space-time. As a consequence, many important geometric and topological properties of Sasakian manifolds were discovered, see e.g. \cite{galicki2005, galicki2008, futaki2009, kollar2007}.

It is well known that Sasakian geometry is naturally related to
K\"{a}hler geometry from two sides: on the one hand Sasakian manifolds can be defined as those manifolds whose metric cone is K\"{a}hler, on the other hand the $1$-dimensional foliation defined by the Reeb vector field  is transversely K\"{a}hler.

A remarkable property of compact K\"{a}hler manifolds is given by the celebrated
Hard Lefschetz Theorem, stating that the cup product with the suitable powers of
the symplectic form gives isomorphisms between the de Rham cohomology groups of
complementary degrees. This result was first stated by Lefschetz in
\cite{lefschetz} but the first complete proof was given by Hodge in
\cite{hodge1952}. One of the applications of the Hard Lefschetz Theorem is
that it gives an obstruction for the existence of a K\"ahler metric on a
compact symplectic manifold.

Later on, an odd dimensional version of the Hard Lefschetz Theorem was proven
for compact coK\"ahler manifolds in \cite{marrero1993}.
Thus one may ask whether a similar property also holds in the context of Sasakian geometry.
So far, the only result that was obtained in this direction is the transversal
Hard Lefschetz Theorem, proved by El Kacimi-Alaoui in \cite{elkacimi}, which holds for the basic cohomology with respect to any homologically orientable transversely K\"{a}hler foliation. In this paper, our aim is to prove that a version of Hard Lefschetz Theorem holds for the de Rham cohomology
of any compact Sasakian manifold.

Our approach needs to be different from the one adopted in  K\"{a}hler geometry.
Indeed, although Sasakian and K\"{a}hler manifolds share many properties, in
this case the picture for Sasakian manifolds shows profound peculiarities.
  Let $\left( M,\omega,g \right)$ be a compact K\"{a}hler
manifold of dimension $2n$ and $\Omega^p_\lap\left( M \right)$ the space of harmonic $p$-forms on $M$. We recall that the Hard Lefschetz Theorem
states that for any $0\le p\le n$, the maps
\begin{align*}
    \omega^p \wedge -\colon \Omega^{n-p}_\lap \left( M \right) & \to
    \Omega^{n+p}_\lap\left(
    M \right)\\[1ex]
    \alpha &\mapsto \omega^p\wedge \alpha
\end{align*}
are isomorphisms.  Now, let $\left( M,\eta, g \right)$ be a compact Sasakian manifold of dimension $2n+1$. As a natural generalization of the above Lefschetz
isomorphism, one can consider, for each $0\le p\le n$, the maps
\begin{equation}\label{sasakian_lef_map}
\begin{aligned}
    \eta\wedge (d\eta)^{p}\wedge -\colon \Omega^{n-p}_\lap\left( M \right) & \to
    \Omega^{n+p+1}_\lap\left( M \right)\\[1ex]
    \alpha &\mapsto \eta\wedge (d\eta)^p \wedge\alpha.
\end{aligned}
\end{equation}
However at this step a serious problem already arises. Namely, differently from the K\"{a}hler case, it is not true that the wedge multiplication by
either $d\eta$ or $\eta\wedge d\eta$ maps harmonic forms into harmonic forms, so in principle the definition of the above maps may happen to be ill posed.

Nevertheless, we discover some spectral properties of the Laplace operator on differential forms which allow to overcome
this obstacle. More precisely, given $\alpha\in \Omega^{n-p}_\lap\left( M \right)$, we show that the forms $\eta\wedge (d\eta)^k\wedge \alpha$ and
$(d\eta)^{k-1}\wedge \alpha$ are eigenforms of the Laplacian with positive integer eigenvalues for all $0\le k\le p-1$. These eigenforms and their
eigenvalues are visualized in Figure~1 at page~\pageref{plot} in the case $n=5$.
In the figure, the points on the horizontal axis represent the spaces of
harmonic forms on~$M$. All other points with coordinates $\left( p,\nu \right)$ denote suitable subspaces of
$$
\left\{\, \beta\in \Omega^p\left( M \right) \,\middle|\, \lap \beta = 4\nu \beta
\right\}.
$$
The segments represent the isomorphisms~\eqref{isomorphismddelta} and~\eqref{isomorphismlambdal} between the corresponding vector spaces.  As shown in the
figure, the  $\lap$-eigenvalues of $\eta\wedge(d\eta)^k\wedge \alpha$ initially increase with $k$ up to degree $n$ and then decrease until to reach zero, for $k=p$. Thus each mapping in \eqref{sasakian_lef_map} is actually well defined, in the sense that its target space is
$\Omega_{\lap}^{n+p+1}(M)$. Moreover, in Theorem \ref{lefschetz} we prove that such map is in fact an isomorphism by
explicitly giving its inverse map.
This isomorphism obviously induces via Hodge theory an isomorphism between the
corresponding de Rham cohomology groups. Namely, for each $0\le p\le n$ we obtain the isomorphism
\begin{equation*}
	\begin{aligned}
		\lef_{n-p}\colon 	H^{n-p}(M) &\to H^{n+p+1}(M)\\
	\left[\, \beta \,\right] &\mapsto \left[\, \eta\wedge (d\eta)^{p}\wedge\pr\,  \beta \,\right],
\end{aligned}
\end{equation*}
where $\pr\beta$ denotes the orthogonal projection of $\beta$ on the space of harmonic forms.
Note that, contrary to the symplectic case, we are forced to use the metric structure in the definition of  $\lef_p$.
Thus, \emph{a priori}, one could expect that different Sasakian metrics on $\left(
M,\eta
\right)$ could lead to different Lefschetz isomorphisms.
To the utter surprise of the authors, this is not the case. In
Theorem~\ref{main2} we prove that the Lefschetz isomorphism is independent of
the metric. This provides an obstruction for a contact manifold to admit
Sasakian structures. In the last section, we introduce the notion of Lefschetz
contact manifold and we prove that their odd Betti numbers up to the middle
dimension are even.

\section{Preliminaries}
In this section we recall the definition of Sasakian manifolds and list some of
their properties. For further details we refer the reader to
\cite{blairbook2010, galicki-book}.

Let $M$ be a smooth manifold of dimension $2n+1$. A $1$-form $\eta$ on $M$ is called a
\emph{contact form} if $\eta\wedge d\eta^n$ nowhere vanishes.
In this case the pair $(M,\eta)$ is called a (strict) \emph{contact manifold}.
We write $\Phi$ for $\frac12 d\eta$ and we denote by $\xi$ the  \emph{Reeb
vector field} on $\left( M,\eta \right)$,
that is the unique vector field on $M$ such that $i_{\xi}\eta=1$ and $i_{\xi}d\eta=0$.

Let $(M,\eta)$ be a contact manifold and $g$ a Riemannian metric on $M$.
 We
define the endomorphism $\phi\colon TM \to
TM$ by $ \Phi (X, Y ) = g( X, \phi Y)$.

Then $(M, \eta ,g)$ is called a \emph{Sasakian manifold} if the following conditions hold:
\begin{enumerate}[($i$)]
\item $\phi^2 =- \Id + \eta \otimes \xi$, where $\Id$ is the identity operator;
\item $g(\phi X , \phi Y )= g(X,Y) - \eta(X)\eta (Y)$ for any vector fields
	$X$ and $Y$ on $M$;
\item the normality condition is satisfied, namely
		\begin{equation*}
			 \left[ \phi, \phi \right]_{FN} +  2 d\eta \otimes
			\xi = 0,
		\end{equation*}
            where $[-,-]_{FN}$ is the Fr\"olicher-Nijenhuis
	    bracket (cf.~\cite{michor}).
\end{enumerate}

It is well known that in any Sasakian manifold
\begin{align}
    \label{etaphi}
    \phi \xi &=0, & \eta \circ \phi &=0.
\end{align}

Now we will introduce notation for some linear operators on the de~Rham algebra $\Omega^*(M)$.
For a $p$-form $\alpha$ on $M$, we denote by $\eps_\alpha$ the operator given by
\begin{equation*}
	\eps_\alpha \beta = \alpha \wedge \beta,
\end{equation*}
where $\beta\in \Omega^*(M)$.
If $\left(M,\eta \right)$ is a compact contact manifold, then $\epseta$ is adjoint to $\ixi$ with respect to the usual
global
scalar product on $\Omega^*(M)$, that is $\epseta = \ixi^*$.
Since $\ixi \eta = 1$, for any $\omega\in \Omega^*(M)$, we get
\begin{align}
     \left\{ \ixi, \epseta\right\}\omega = \omega,
    \label{lambdal}
\end{align}
where the curly brackets are used to  denote the anti-commutator of two operators.

Let $(M,\eta,g)$ be a compact Sasakian manifold. We define the operators $L$ and $\Lambda$ on $\Omega^*(M)$ by
\begin{align*}
L & = \eps_\Phi, & 	 \Lambda = L^*.
\end{align*}
Then, since $d$ is a graded derivation on $\Omega^*(M)$ of degree $1$ and
$\eta$ is a $1$-form, we get
\begin{align}
    \label{dl}
    \left\{ d, \epseta \right\} = \eps_{d\eta} = 2\eps_\Phi =2L.
\end{align}
Therefore
\begin{align}
    \label{deltalambda}
    \{i_\xi, \delta\} = \{d, \epseta\}^* = 2 L^* = 2\Lambda.
\end{align}
Hereafter, we will use some elements of
Fr\"olicher-Nijenhuis calculus, developed in~\cite{indag} (see also
\cite[Section~8]{michor}).
In this framework, for every
vector valued $(k+1)$-form $\psi$, the graded derivation $i_\psi$
of degree $k$
on
$\Omega^*(M)$  is defined. It acts on $\omega\in \Omega^p(M)$ by
\begin{equation*}
	\left( i_\psi \omega \right)( X_1,\dots, X_{p+k} ) =
	\sum_{\sigma} (-1)^\sigma \omega ( \psi( X_{\sigma(1)},\dots,
	X_{\sigma(k+1)}),  \dots, X_{\sigma(p+k)}
	),
\end{equation*}
where  the summation is taken over all $(k+1,p-1)$-shuffles.
In the case $k+1=0$, that is when $\psi=X$ is a vector field, we reobtain the usual
interior product $i_X$.
If $\psi$ is an endomorphism of $TM$, that is when $k=0$, the formula above can
be rewritten as
\begin{equation*}
	i_\psi \omega \left( X_1, \dots, X_p \right) = \sum_{s=1}^p \omega\left( X_1,
\dots, \psi X_s, \dots, X_p
\right).
\end{equation*}
For any vector valued $(k+1)$-form $\psi$
 the operator $\lie_\psi$ is
 defined to be  $ i_\psi d - (-1)^{k} di_\psi$.
Note that every $\lie_\psi$ is a graded derivation of degree $k+1$ on
$\Omega^*(M)$.
If $\psi=X$ is a vector field, then $\lie_X$ is the usual Lie derivative.

When $\psi$ is the identity operator $\Id\colon TM\to TM$, the operator $i_\psi$ will be denoted by
$\deg$, motivated by the fact that for any $p$-form $\omega$ we have
\begin{equation*}
	\deg \omega = i_\Id \omega = p\omega.
\end{equation*}
Now we summarize several results from~\cite{fujitani} on commutators between
operators on $\Omega^*\left( M \right)$, where $M$ is a  Sasakian manifold
(N.B.: Fujitani uses $\Phi$ for $i_\phi$, $\varphi$ for
$\Phi$, $\lambda$ for $\ixi$, and $l$ for $\epseta$).

\begin{theorem}
    \label{commutators}
    Let $M$ be a compact Sasakian manifold of dimension $2n + 1$. Then
         the operator $i_\phi$ commutes with $\epseta$, $\ixi$,
            $L$, and $\Lambda$.
         The Lie derivative $\mathcal{L}_\xi$ commutes with
            $d$, $\delta$, $\epseta$, $\ixi$, $L$, $\Lambda$, and
            $i_\phi$.
            Furthermore
            \begin{align}
                \label{dLambda}
\left[ d,\Lambda \right] &= \left[ i_\phi, \delta \right] -2\left( n-\deg
\right)\ixi\\
\label{laplambda}
\left[ \lap, \ixi \right] & = 2\left[ i_\phi, \delta \right] - 4\left( n
-\deg
\right)\ixi\\
\label{lapiphi}
\left[ \lap, i_\phi \right] & = -2 \left( \mathcal{L}_\xi - \ixi d + \epseta \delta
\right)\\
\label{lapl}
\left[ \lap, \epseta \right] & =- 2 \lie_\phi + 4 \epseta\left( n -\deg
\right).
    \end{align}
\end{theorem}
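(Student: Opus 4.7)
The plan is to split the theorem into three groups---the commutations of $i_\phi$ with the basic multiplication/contraction operators, the commutations of $\mathcal{L}_\xi$ with everything in sight, and the four nontrivial identities~\eqref{dLambda}--\eqref{lapl}---and attack each using Fr\"olicher-Nijenhuis calculus together with the adjoint pairing induced by $g$.

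For the $i_\phi$ group, $i_\phi$ is a graded derivation of degree~$0$, so $[i_\phi,\eps_\alpha]=\eps_{i_\phi\alpha}$ for every form $\alpha$. By~\eqref{etaphi}, $i_\phi\eta=\eta\circ\phi=0$, whence $[i_\phi,\epseta]=0$; for $L=\eps_\Phi$ one computes $(i_\phi\Phi)(X,Y)=g(\phi X,\phi Y)+g(X,\phi^2 Y)$, which vanishes by the Sasakian compatibility $(ii)$ together with $\phi^2=-\Id+\eta\otimes\xi$ and $g(X,\xi)=\eta(X)$; hence $[i_\phi,L]=0$, and $L^2$-adjunction (using $\epseta^*=\ixi$, $L^*=\Lambda$) delivers the remaining two commutation relations (the sign of $i_\phi^*$ is irrelevant since both commutators vanish). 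For the $\mathcal{L}_\xi$ group, $\xi$ is Killing on a Sasakian manifold and its flow preserves the entire structure, so $\mathcal{L}_\xi\eta=0$, $\mathcal{L}_\xi\phi=0$, $\mathcal{L}_\xi\Phi=0$; hence $\mathcal{L}_\xi$ commutes with $d$ (Cartan), with $\delta$ (Killing), with $\epseta$ and $L$ (graded derivation applied to $\eta$ and $\Phi$), with $i_\phi$ (since $\mathcal{L}_\xi\phi=0$), and with $\ixi$ and $\Lambda$ by adjunction.

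The heart of the theorem is~\eqref{dLambda}, the Sasakian analogue of the K\"ahler identity $[d,\Lambda]=-i\bar\partial^*$. I would prove it by working in a $\phi$-adapted local frame and comparing with the transverse K\"ahler structure on the contact distribution, where $\Phi$ plays the role of the K\"ahler form: the horizontal part reproduces the classical Nakano-type identity $[d,\Lambda]=[i_\phi,\delta]$, while the non-closedness $d\eta=2\Phi$ forces the extra degree-dependent correction $-2(n-\deg)\ixi$. Once~\eqref{dLambda} is in hand, \eqref{laplambda} follows by expanding $[\lap,\ixi]=[d\delta+\delta d,\ixi]$ and inserting~\eqref{deltalambda} together with~\eqref{dLambda}; \eqref{lapl} is obtained as the $L^2$-adjoint of~\eqref{laplambda}, modulo a degree shift coming from $[\deg,\epseta]=\epseta$; and \eqref{lapiphi} follows from $\lie_\phi=i_\phi d-d i_\phi$ combined with the already established commutators and a Cartan-type expansion of $[\lap,i_\phi]$.

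The main technical obstacle is pinning down~\eqref{dLambda} with correct signs and degree corrections: one must track Fr\"olicher-Nijenhuis sign conventions carefully and isolate the vertical contributions from $\eta$ and $\xi$ that produce the degree operator. All other identities then cascade from this one by essentially algebraic manipulations on the de Rham algebra.
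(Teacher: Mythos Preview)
Your outline is sound, but note that the paper does not actually prove this theorem: its proof consists entirely of the citation ``See Propositions~1.1,~1.2,~3.3 and Theorem~3.2 in~\cite{fujitani}.'' So you are supplying considerably more than the paper itself does.

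A few remarks on how your sketch lines up with what the authors do elsewhere. Your derivations of $[i_\phi,\epseta]=0$ and $[i_\phi,\ixi]=0$ are in fact carried out by the authors in their Proposition~2.3 (equations~\eqref{iphil} and~\eqref{iphilambda}), by essentially the same derivation-calculus argument; for $[i_\phi,\ixi]$ they use the Fr\"olicher--Nijenhuis identity $[i_\psi,i_X]=-i_{\psi X}$ together with $\phi\xi=0$ rather than adjunction, but both routes are one line. Your plan to obtain~\eqref{laplambda} from~\eqref{dLambda} via $[\lap,\ixi]=2[d,\Lambda]$ (using $\{d,\ixi\}=\lie_\xi$, $\{\delta,\ixi\}=2\Lambda$, and $[\lie_\xi,\delta]=0$) checks out, as does getting~\eqref{lapl} as the adjoint of~\eqref{laplambda} using $i_\phi^*=-i_\phi$ and the degree shift $[\deg,\epseta]=\epseta$. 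The one place where your proposal remains a genuine sketch is~\eqref{dLambda} itself: invoking ``the transverse K\"ahler identity plus a vertical correction'' is the right intuition, but turning that into an honest computation is exactly the work Fujitani does in the cited paper, and you correctly flag it as the technical crux. In short, your cascade from~\eqref{dLambda} to the other three identities is a clean reorganization of Fujitani's results, whereas the paper is content to import them wholesale.
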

\begin{proof}
    See Propositions~1.1,~1.2,~3.3 and Theorem~3.2 in~\cite{fujitani}.
\end{proof}
Using the equalities \eqref{dLambda}--\eqref{lapl}, Fujitani reobtained a few results of
Tachibana~\cite{tachibana} in~\cite[Theorem~4.1]{fujitani} and complemented them with the dual ones
 in
\cite[Corollary~4.2]{fujitani}. Below is the summary of these results.
\begin{proposition}
    \label{tachi}
    Let $M$ be a compact Sasakian manifold of dimension $2n+1$ and
    $\omega$ a harmonic $p$-form on $M$. Then $i_\phi \omega$ is  also harmonic and,
    moreover,
    \begin{enumerate}[(i)]
        \item if $p\le n$, then $\ixi \omega =0$;
        \item if $p\ge n+1$, then $\epseta\omega =0$;
        \item if $p\le n+1$, then $\Lambda \omega = 0$;
        \item if $p\ge n$, then $L \omega =0$.
    \end{enumerate}
\end{proposition}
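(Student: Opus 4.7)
The linchpin is the identity $\lie_\xi\omega=0$ for any harmonic $\omega$. Since $\lie_\xi$ commutes with $\lap$ by Theorem~\ref{commutators}, the form $\lie_\xi\omega$ is itself harmonic; combining Cartan's formula with $d\omega=0$ shows $\lie_\xi\omega=d\ixi\omega$ is exact, hence zero by Hodge orthogonality of harmonic and exact forms. Substituting $\lie_\xi\omega=d\omega=\delta\omega=0$ into~\eqref{lapiphi} gives $\lap(i_\phi\omega)=0$, so $i_\phi\omega$ is harmonic.

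For (i) and (ii) the strategy is to make $\lap$ act on an auxiliary form with a negative scalar and then invoke positivity of $\lap$. Applying~\eqref{laplambda} to $\omega$, the bracket $[i_\phi,\delta]\omega$ vanishes because both $\omega$ and $i_\phi\omega$ are co-closed; reading $(n-\deg)$ as evaluated on the shifted $(p-1)$-form $\ixi\omega$ then yields $\lap(\ixi\omega)=-4(n-p+1)\,\ixi\omega$, which forces $\ixi\omega=0$ as soon as $n-p+1>0$, i.e.\ for $p\le n$. A parallel computation based on~\eqref{lapl}, together with $\lie_\phi\omega=i_\phi d\omega-di_\phi\omega=0$, produces $\lap(\epseta\omega)=4(n-p)\,\epseta\omega$, forcing $\epseta\omega=0$ whenever $p\ge n+1$.

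Statements (iii) and (iv) I would derive from the rewriting of $\Lambda$ and $L$ via~\eqref{dl} and~\eqref{deltalambda} as $L=\tfrac12\{d,\epseta\}$ and $\Lambda=\tfrac12\{\ixi,\delta\}$, which on any harmonic form collapse to $L\omega=\tfrac12 d(\epseta\omega)$ and $\Lambda\omega=\tfrac12\delta(\ixi\omega)$. For $p\le n$, part~(i) gives $\Lambda\omega=0$ at once; the remaining case $p=n+1$ is exactly the boundary at which the scalar $-4(n-p+1)$ of the previous paragraph vanishes, so $\ixi\omega$ is harmonic, hence co-closed, and $\Lambda\omega=0$. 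Symmetrically (iv) follows from (ii) when $p\ge n+1$, while at the critical value $p=n$ the scalar $4(n-p)$ vanishes, making $\epseta\omega$ harmonic and $d(\epseta\omega)=0$. The main subtlety I expect is the careful bookkeeping for the composed operators $(n-\deg)\ixi$ and $\epseta(n-\deg)$: it is precisely this one-unit degree shift that produces the sharp bounds $p\le n+1$ and $p\ge n$ in (iii) and (iv), rather than those naively inherited from (i) and (ii).
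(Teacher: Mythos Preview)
Your argument is correct and follows precisely the route the paper attributes to Fujitani: the paper itself gives no proof here but cites \cite[Theorem~4.1 and Corollary~4.2]{fujitani}, noting that Fujitani derived these facts ``using the equalities \eqref{dLambda}--\eqref{lapl}'', which is exactly what you do. Your handling of the degree shifts in $(n-\deg)\ixi$ versus $\epseta(n-\deg)$ and of the boundary cases $p=n$ and $p=n+1$ is accurate.
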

In the following proposition we prove some other useful identities.
\begin{proposition}
    In any Sasakian manifold we have
    \begin{align}
        \label{iphil}
     &   \left[ i_\phi ,\epseta \right]  = 0 \\
     \label{iphilambda} & \left[ i_\phi,\ixi \right]    = 0\\
	\label{liephisquare}& \lie_\phi^2 = -2L\lie_\xi\\
    \label{deltal}
  &   \left\{ \delta, \epseta \right\} = -\lie_\xi
.
    \end{align}
\end{proposition}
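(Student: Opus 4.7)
The plan is to prove the four identities in the order stated, with the first two being essentially algebraic consequences of $\phi\xi=0$ and $\eta\circ\phi=0$, and the last two requiring the Fröhlicher--Nijenhuis machinery and the Killing property of $\xi$, respectively.

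For \eqref{iphil}, I would use that $i_\phi$ is a graded derivation of degree zero on $\Omega^*(M)$ (since $\phi$ is a vector-valued $1$-form), so that $i_\phi(\eta\wedge\omega) = (i_\phi\eta)\wedge\omega + \eta\wedge i_\phi\omega$. By the explicit formula for $i_\psi$ when $\psi$ is an endomorphism of $TM$, $i_\phi\eta = \eta\circ\phi$, which vanishes by~\eqref{etaphi}. Hence $[i_\phi,\eps_\eta]=0$. For \eqref{iphilambda}, I would compute directly: for $\omega\in\Omega^p(M)$,
\begin{equation*}
(i_\phi\ixi\omega)(X_1,\ldots,X_{p-1}) - (\ixi i_\phi\omega)(X_1,\ldots,X_{p-1}) = -\omega(\phi\xi,X_1,\ldots,X_{p-1}),
\end{equation*}
which vanishes thanks to $\phi\xi=0$.

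For \eqref{liephisquare}, the cleanest route is the basic identity of the Fröhlicher--Nijenhuis calculus $[\lie_\psi,\lie_\chi]=\lie_{[\psi,\chi]_{FN}}$, which specialized to $\psi=\chi=\phi$ reads $2\lie_\phi^2=\lie_{[\phi,\phi]_{FN}}$. The normality condition supplies $[\phi,\phi]_{FN} = -2\,d\eta\otimes\xi$. It then remains to verify that $\lie_{d\eta\otimes\xi}\omega = 2L\lie_\xi\omega$. Using $i_{\alpha\otimes Y}\omega = \alpha\wedge i_Y\omega$ together with $d(d\eta)=0$, the definition $\lie_\psi = i_\psi d - (-1)^k d\,i_\psi$ (here $k=1$) gives
\begin{equation*}
\lie_{d\eta\otimes\xi}\omega = d\eta\wedge i_\xi d\omega + d\eta\wedge d\,i_\xi\omega = d\eta\wedge\lie_\xi\omega = 2L\lie_\xi\omega,
\end{equation*}
yielding \eqref{liephisquare}. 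The main bookkeeping hazard here is keeping the signs straight in $\lie_\psi$ for a vector-valued $2$-form, which is exactly where the normality condition is most cleanly applied.

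For \eqref{deltal}, I would take the $L^2$-adjoint of Cartan's magic formula $\lie_\xi=\{d,\ixi\}$. Since $d^*=\delta$ and $\ixi^*=\epseta$, the right-hand side becomes $\{\delta,\epseta\}$. On the other hand, on any Sasakian manifold the Reeb field $\xi$ is Killing, so it generates a one-parameter group of isometries and $\lie_\xi$ is skew-adjoint, i.e.\ $\lie_\xi^* = -\lie_\xi$. Combining the two gives $\{\delta,\epseta\}=-\lie_\xi$. The only nontrivial input is the skew-adjointness of $\lie_\xi$, which is a standard consequence of $\xi$ being Killing; this is the step I expect could be the main conceptual obstacle, but it is entirely standard in the Sasakian setting and matches the adjoint strategy already used by the authors to derive \eqref{deltalambda} from \eqref{dl}.
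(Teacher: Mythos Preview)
Your proposal is correct and follows essentially the same route as the paper: the derivation property for \eqref{iphil}, the identity $[i_\psi,i_X]=-i_{\psi X}$ (which your direct computation unpacks) for \eqref{iphilambda}, the Fr\"olicher--Nijenhuis identity $2\lie_\phi^2=\lie_{[\phi,\phi]_{FN}}$ plus normality for \eqref{liephisquare}, and the Killing property of $\xi$ for \eqref{deltal}. The only cosmetic difference is that for \eqref{deltal} the paper cites Goldberg's formula $\lie_X+\{\delta,\eps_{g(X,-)}\}=0$ for Killing $X$ directly, whereas you derive the same statement by dualizing Cartan's formula and invoking $\lie_\xi^*=-\lie_\xi$; these are the same argument, and indeed the paper itself uses your formulation later in the proof of Proposition~\ref{greenliexi}.
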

\begin{proof}
     Since $i_\phi$ is a derivation of degree zero, we have
     $\left[ i_\phi, \epseta \right] = \epsilon_{i_\phi \eta}=0$, as $i_\phi\eta =0$
     by~\eqref{etaphi}.
Next, it is easy to check that for any $\psi\colon TM\to TM$ and
any vector field~$X$ we have $\left[ i_\psi , i_X \right] = -i_{\psi X}$. Thus
$\left[i_\phi, \ixi  \right]$ is zero by~\eqref{etaphi}.
     Further, according to Fr\"olicher-Nijenhuis calculus, we have $\left\{ \lie_\phi, \lie_\phi
     \right\} = \lie_{ \left[ \phi,\phi \right]_{FN}}$. Thus, from the
   normality condition for Sasakian manifolds, we get
\begin{align*}
	\lie_\phi^2&
	= \frac12
	\lie_{\left[ \phi,\phi \right]_{FN}} = -\lie_{d\eta\otimes \xi}
	 =-\left\{ i_{d\eta\otimes \xi}, d \right\} = -\left\{ d\eta \wedge
	i_\xi, d \right\}\\& =\!  -
	d\eta \wedge \lie_\xi = - 2L\lie_\xi.
\end{align*}
Finally, it was shown
on page 109 of~\cite{goldberg}
that, for any Killing vector field $X$, one has
$
\lie_X + \left\{ \delta, \eps_{g(X,-)}  \right\} = 0
$.
Since in any Sasakian manifold the Reeb vector field $\xi$ is Killing and
$\eta=g\left( \xi,- \right)$,
the equation
\eqref{deltal} holds.
\end{proof}

\section{Hard Lefschetz isomorphism for harmonic forms}
In this section we establish the Hard Lefschetz isomorphism   between the spaces of
harmonic $p$-forms and harmonic $(2n+1-p)$-forms in a compact Sasakian manifold
$M$  of dimension $2n+1$.

We start by introducing some notation.
Let $A_1$, \dots, $A_k$ be operators on $\Omega^*\left( M \right)$.  We will denote by
$\Omega^p_{A_1,\dots, A_k}\left( M \right)$ the set of $p$-forms $\omega$ such that
$A_1\omega = \dots = A_k\omega =
0$ and we will use $\Omega^{p,\nu}_{A_1,\dots, A_k}\left( M \right)$ as a shorthand for
$\Omega^p_{\lap-\nu\Id, A_1,\dots, A_k}\left( M \right)$, where $\nu$ is a real
number. Of course these spaces are empty if $\nu<0$ and $M$ is compact, since all $\lap$-eigenvalues
are non-negative in this case.

We will be mainly concerned with two families of spaces of differential forms on
a compact Sasakian manifold,
namely, $\Omega^{p,\nu}_{d,i_\xi,\epseta\delta}(M)$ and
$\Omega^{p,\nu}_{\delta,\epseta,i_\xi d}(M)$, where $0\le p\le 2n+1$ and
$\nu$ are non-negative real numbers. It follows from~\eqref{deltal} that the
spaces of both families are included in  $\Omega^*_{\lie_\xi} (M)$.
Let us show that these two families are related to each other by the Hodge
star operator $*$.

\begin{proposition}
	\label{dual}
	Let $M$ be a compact Sasakian manifold of dimension $2n+1$. Then for any $p$-form $\omega$,
	we have
	  $\omega\in \Omega^{p,\nu}_{d,i_\xi, \epseta\delta}(M)$ if and
only if $*\omega \in \Omega^{2n+1-p,\nu}_{\delta,\epseta,\ixi d}(M)$.
\end{proposition}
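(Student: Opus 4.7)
The plan is to prove the equivalence by applying the Hodge star operator $*$ to each of the four defining conditions and using the standard intertwining relations between $*$ and the operators $d$, $\delta$, $i_\xi$, $\epseta$, $\lap$. Since $*$ is invertible on forms (with $** = \pm\Id$), the resulting equivalences automatically run in both directions.

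First I would collect the identities. On an oriented Riemannian manifold of dimension $2n+1$ one has $*\lap = \lap *$, together with $*d = \pm \delta *$ and $*\delta = \pm d *$ (with signs depending on degree which are immaterial here, since we only care about vanishing). Because $\eta = g(\xi,-)$ is the metric dual of the Reeb vector field, the operators $\epseta$ and $\ixi$ are mutually adjoint (this was already noted after equation~\eqref{lambdal}), and they are intertwined by the star up to sign:
\begin{equation*}
* \epseta = \pm \ixi *, \qquad * \ixi = \pm \epseta *.
\end{equation*}

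Next I would translate the four conditions defining $\omega\in \Omega^{p,\nu}_{d,\ixi,\epseta\delta}(M)$ one by one. The eigenvalue condition $\lap \omega = \nu\omega$ becomes $\lap(*\omega) = \nu(*\omega)$ since $*$ commutes with $\lap$. The closedness $d\omega = 0$ becomes $\delta(*\omega) = 0$ via $*d\omega = \pm \delta(*\omega)$. The horizontality $\ixi\omega = 0$ becomes $\epseta(*\omega) = 0$ via $*\ixi\omega = \pm \epseta(*\omega)$. The remaining condition requires chaining two intertwinings:
\begin{equation*}
* \epseta \delta \omega \;=\; \pm\, \ixi (*\delta\omega) \;=\; \pm\, \ixi d(*\omega),
\end{equation*}
so $\epseta\delta\omega = 0$ if and only if $\ixi d(*\omega) = 0$.

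Combining the four equivalences gives exactly the characterization of $*\omega$ as an element of $\Omega^{2n+1-p,\nu}_{\delta,\epseta,\ixi d}(M)$. The converse direction is obtained by applying $*$ again and using $** = \pm\Id$ on $p$-forms. No substantive obstacle is expected; the only care needed is to keep track of which adjoint relation to invoke at each step, but since none of the signs affect vanishing, the argument reduces to routine Hodge-star bookkeeping.
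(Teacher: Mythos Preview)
Your argument is correct and essentially identical to the paper's own proof: both simply record the standard intertwining identities $\delta * = \pm * d$, $\epseta * = \pm * \ixi$, and $\lap * = * \lap$, then chain the last two to get $\ixi d * = \pm * \epseta\delta$. The paper tracks the exact signs (e.g.\ $\delta *\omega = (-1)^{p+1} * d\omega$) whereas you suppress them, but as you observe this makes no difference since only vanishing is at stake.
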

\begin{proof}
It is easy to check that for any $p$-form $\omega$, one has
$\delta * \omega = (-1)^{p+1} *d\omega$, $\epseta *\omega = (-1)^{p+1}
*\ixi\omega$.  Hence $\ixi d * \omega =-  * \epseta\delta \omega$. Moreover, and
 $\lap  * \omega =* \lap \omega $.
\end{proof}

Now we show that the spaces of harmonic forms on
 a compact Sasakian manifold are included in
the above families of subspaces.
\begin{proposition}
    \label{characterization}
Let $M$ be a compact Sasakian manifold of dimension $2n+1$.
\begin{enumerate}[(i)]
    \item For $p\le n$, we have
	    \begin{equation*}
	    	    \Omega^p_{\lap}\left( M \right) =
	    	    \Omega^{p,0}_{ d,\ixi,\epseta\delta}\left( M
	        \right)\mbox{\  and\ \ \  }\Omega^{p, 0}_{ \delta, \epseta,\ixi
		d}
		    (M)=0.
	    	    \end{equation*}
	    \item For $p\ge n+1$, we have
		    \begin{equation*}
		    		    \Omega^p_{\lap}\left( M \right) =
			            \Omega^{p,0}_{ \delta, \epseta, \ixi
				    d}\left( M
			            \right)\mbox{\  and\ \ \  }
			    	{\Omega^{p,0}_{d,\ixi,\epseta\delta}(M) =0}.
			\end{equation*}
	    \end{enumerate}
\end{proposition}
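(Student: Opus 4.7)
My proof rests on two ingredients already available: (a) on a compact manifold, $\lap\omega=0$ is equivalent to $\omega$ being harmonic, i.e. to $d\omega=0$ and $\delta\omega=0$; and (b) the anticommutation identity \eqref{lambdal}, namely $\{\ixi,\epseta\}=\Id$, which forces any $p$-form annihilated by both $\ixi$ and $\epseta$ to vanish. Every nontrivial inclusion above will follow from combining these with Proposition~\ref{tachi}.

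For the equalities in (i) and (ii), only one inclusion needs work, since $\Omega^{p,0}_{\ldots}(M)$ already includes $\lap\omega=0$ in its definition. For (i), given a harmonic $p$-form $\omega$ with $p\le n$, I would simply note that $d\omega=0$, that $\delta\omega=0$ (so $\epseta\delta\omega=0$), and that Proposition~\ref{tachi}(i) yields $\ixi\omega=0$; this places $\omega$ in $\Omega^{p,0}_{d,\ixi,\epseta\delta}(M)$. The equality in (ii) is symmetric: for a harmonic $p$-form with $p\ge n+1$, I would use $d\omega=0$ (so $\ixi d\omega=0$), $\delta\omega=0$, and $\epseta\omega=0$ from Proposition~\ref{tachi}(ii).

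The vanishing statements are then a short application of \eqref{lambdal}. For (i), let $\omega\in\Omega^{p,0}_{\delta,\epseta,\ixi d}(M)$ with $p\le n$. The condition $\lap\omega=0$ makes $\omega$ harmonic, so Proposition~\ref{tachi}(i) supplies $\ixi\omega=0$; since $\epseta\omega=0$ by hypothesis, \eqref{lambdal} gives
\[
\omega \;=\; \{\ixi,\epseta\}\,\omega \;=\; \ixi(\epseta\omega) + \epseta(\ixi\omega) \;=\; 0.
\]
For (ii), the same scheme applies to $\omega\in\Omega^{p,0}_{d,\ixi,\epseta\delta}(M)$ with $p\ge n+1$: now Proposition~\ref{tachi}(ii) provides $\epseta\omega=0$, the hypothesis gives $\ixi\omega=0$, and \eqref{lambdal} forces $\omega=0$.

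There is no real obstacle: Proposition~\ref{tachi} does the heavy lifting and \eqref{lambdal} finishes the job. As a sanity check, one could alternatively deduce (ii) from (i) (or conversely) by invoking the Hodge-star duality of Proposition~\ref{dual}, which swaps the two annihilator systems $\{d,\ixi,\epseta\delta\}$ and $\{\delta,\epseta,\ixi d\}$, preserves the eigenvalue $\nu=0$, and sends degree $p$ to $2n+1-p$, interchanging the ranges $p\le n$ and $p\ge n+1$.
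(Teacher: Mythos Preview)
Your proof is correct and follows essentially the same approach as the paper: both use Proposition~\ref{tachi} to supply the missing annihilation condition ($\ixi\omega=0$ for $p\le n$, $\epseta\omega=0$ for $p\ge n+1$) and then apply \eqref{lambdal} to conclude the vanishing statements. The only cosmetic difference is that the paper proves part (i) in detail and then dispatches (ii) by the Hodge-star duality of Proposition~\ref{dual}, whereas you write out the symmetric direct argument for (ii) and mention duality only as an alternative.
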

\begin{proof}
It is obvious that $\Omega^{p,0}_{ d,\ixi,\epseta\delta}\left( M
        \right)\subset \Omega^p_{\lap}\left( M \right)$. Let $\omega \in
        \Omega^{p}_\lap\left( M \right)$ with $p\le n$. Since $\omega$
        is harmonic, we have
            $d\omega = 0$ and  $\epseta \delta \omega  = 0$.
        Moreover, by Proposition~\ref{tachi} we have that $\ixi \omega =0$,
        since $p\le n$.
        Thus  $\Omega^{p,0}_{d,\ixi,\epseta \delta}(M) =
        \Omega^p_\lap(M)$.

        In order to prove that $\Omega^{p,0}_{ \delta, \epseta ,
        \ixi d}(M) =0$, notice that
\begin{align*}
    \Omega^{p,0}_{\delta,\epseta ,\ixi d}(M) \subset
    \Omega^{p}_\lap(M) = \Omega^{p,0}_{d,\ixi,\epseta \delta}(M)
\end{align*}
implies
$\epseta \omega =0$ and $\ixi \omega=0$ for any  $\omega\in
\Omega^{p,0}_{\delta,\epseta ,\ixi d}(M)$. Therefore using \eqref{lambdal}, we
get
\begin{equation*}
\omega = \ixi \epseta \omega + \epseta \ixi \omega = 0.
\end{equation*}
	Hence $\Omega^{p,0}_{\delta,\epseta ,\ixi d}(M) =0$.
The second part of the proposition can be proved by duality considerations, using Proposition~\ref{dual}.
\end{proof}
\begin{proposition}
    \label{propddelta}
    Let $M$ be a compact Sasakian manifold.
    \begin{enumerate}[(i)]
        \item If $\omega \in \Omega^{p, 4\nu}_{\delta,
            \epseta ,\ixi d}(M)$ then $d\omega \in \Omega^{p+1,
            4\nu}_{ d, \ixi, \epseta \delta}(M)$.
            If moreover $\nu\not=0$, then $d\omega \not=0$.
	    \vspace{0ex}
        \item If $\omega \in \Omega^{p,4\nu}_{ d,\ixi,
            \epseta \delta}(M)$, then $\delta \omega \in \Omega^{p-1,
            4\nu}_{\delta, \epseta , \ixi d}(M)$.
            If moreover $\nu\not=0$, then $\delta\omega \not=0$.
    \end{enumerate}
\end{proposition}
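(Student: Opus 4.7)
The plan is to verify directly, in each case, that the image form satisfies the four defining conditions of the target subspace, relying only on the fundamental Hodge identity $\lap=d\delta+\delta d$, the trivial identities $d^2=0$ and $\delta^2=0$, the commutation $[\lap,d]=[\lap,\delta]=0$, and the three vanishing conditions packaged into the hypothesis on $\omega$. The non-vanishing clause in each part will follow from the observation that, modulo the hypotheses, $d\omega=0$ together with $\delta\omega=0$ forces $\lap\omega=0$, contradicting $\omega$ being a non-zero $4\nu$-eigenform.

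For part (i), given $\omega\in\Omega^{p,4\nu}_{\delta,\epseta,\ixi d}(M)$, I will verify each condition defining $\Omega^{p+1,4\nu}_{d,\ixi,\epseta\delta}(M)$: the equation $d(d\omega)=0$ is immediate, $\ixi d\omega=0$ is among the hypotheses, and $\lap(d\omega)=4\nu d\omega$ follows from $[\lap,d]=0$. The only step needing an actual computation is $\epseta\delta(d\omega)=0$; since $\delta\omega=0$, the Hodge identity gives $\delta d\omega=\lap\omega=4\nu\omega$, and then
\begin{equation*}
\epseta\delta(d\omega)=4\nu\,\epseta\omega=0,
\end{equation*}
again by hypothesis. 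The same computation yields the non-vanishing claim: if $d\omega=0$ then $\delta d\omega=0$, but we just showed $\delta d\omega=4\nu\omega$, so $\nu\neq 0$ forces $\omega=0$.

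Part (ii) can be handled either by the Hodge-star duality of Proposition~\ref{dual} applied to part (i), or by a directly parallel computation, which I would prefer for clarity. For $\omega\in\Omega^{p,4\nu}_{d,\ixi,\epseta\delta}(M)$, the conditions $\delta(\delta\omega)=0$ and $\epseta\delta\omega=0$ (the latter being a hypothesis) and $\lap(\delta\omega)=4\nu\delta\omega$ (from $[\lap,\delta]=0$) are immediate. For the remaining condition I compute, using $d\omega=0$,
\begin{equation*}
d\delta\omega=\lap\omega-\delta d\omega=4\nu\omega,
\end{equation*}
so $\ixi d\delta\omega=4\nu\,\ixi\omega=0$ by hypothesis. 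The non-vanishing is the symmetric argument: $\delta\omega=0$ combined with $d\omega=0$ gives $\lap\omega=0$, hence $\omega=0$ whenever $\nu\neq 0$.

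There is no substantial obstacle here; the statement is essentially a clean book-keeping exercise, and the only content is the observation that the three auxiliary vanishing conditions defining each subspace are exactly propagated by $d$ and $\delta$ in opposite directions, with the eigenvalue preserved because $\lap$ commutes with both. The mild subtlety worth flagging in the write-up is that the non-vanishing statement should be read as asserting injectivity of $d$ (respectively $\delta$) on the given eigenspace when $\nu\neq 0$, which is what the contradiction above actually proves.
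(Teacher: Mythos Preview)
Your proof is correct and follows essentially the same approach as the paper: the paper verifies the same four conditions for part (i) in the same way (using $\epseta\delta d\omega=\epseta\lap\omega=4\nu\epseta\omega=0$ and $\delta d\omega=4\nu\omega$ for the non-vanishing), and then invokes the Hodge-star duality of Proposition~\ref{dual} for part (ii), whereas you also spell out the parallel direct argument. The only cosmetic difference is that you phrase the non-vanishing as injectivity of $d$ (resp.\ $\delta$) on the eigenspace, which is of course equivalent.
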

\begin{proof}
	By duality established in Proposition~\ref{dual}, it is enough to prove just $(i)$. Let $\omega\in\Omega^{p, 4\nu}_{\delta,
            \epseta ,\ixi d} (M)$. Then
            \begin{align*}
 d\left( d\omega \right) &= 0, &
\ixi d \omega &= 0, & \lap d \omega& = d\lap \omega = 4\nu d\omega.
            \end{align*}
It is left to show that $\epseta \delta d \omega =0$.
Since $\delta \omega =0$ and $\epseta \omega =0$, we get
\begin{align*}
    \epseta \delta d \omega = \epseta \delta d \omega + \epseta  d \delta \omega
    = \epseta \lap \omega =
 4\nu \epseta \omega = 0.
\end{align*}
If moreover $\nu \not=0$, then
$\delta d \omega = \lap \omega = 4\nu \omega \not=0$.
            Thus also  $d\omega \not=0$.

\end{proof}
Proposition~\ref{propddelta} shows that for $\nu\not=0$ we have
the pair of isomorphisms
\begin{equation}
    \label{isomorphismddelta}
    \xymatrix{ \Omega^{p, 4\nu}_{ \delta, \epseta , \ixi d}\left( M
\right)\ar@<2pt>[r]^d  &  \Omega^{p+1,
4\nu}_{ d, \ixi, \epseta \delta}\left( M \right)\ar@<2pt>[l]^\delta},
\end{equation}
    for any $0\le p\le 2n$.
\begin{theorem}
    \label{complicated}
    Let $M$ be a compact Sasakian manifold of dimension $2n+1$.
    \begin{enumerate}[(i)]
            \vspace{0.5ex}
        \item If $\omega\in
    \Omega^{p,4\nu}_{\delta,\epseta ,\ixi d}\left( M
    \right)$ then $\ixi\omega\in
    \Omega^{p-1,4\left( \nu + p - n-1 \right)}_{d,\ixi, \epseta  \delta}\left(
            M \right)$.
            \vspace{0.9ex}
        \item If $\omega \in \Omega^{p, 4\nu}_{ d, \ixi,
            \epseta \delta}\left( M \right)$ then $\epseta \omega \in
            \Omega^{p+1, 4\left( \nu -p + n \right)}_{
            \delta, \epseta , \ixi d}\left( M \right)$.
                \end{enumerate}
     \end{theorem}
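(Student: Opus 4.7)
The plan is to prove (i) directly and deduce (ii) by duality. Since by Proposition~\ref{dual} the two families of subspaces are exchanged by~$*$, applying (i) to $*\omega\in\Omega^{2n+1-p,4\nu}_{\delta,\epseta,\ixi d}(M)$ and then applying~$*$ once more yields (ii); here one uses that the identity $\epseta*=(-1)^{q+1}*\ixi$ on $q$-forms (from the proof of Proposition~\ref{dual}) makes $*\ixi*\omega$ a scalar multiple of $\epseta\omega$.

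Fix $\omega\in\Omega^{p,4\nu}_{\delta,\epseta,\ixi d}(M)$. Three of the four conditions defining $\Omega^{p-1,4(\nu+p-n-1)}_{d,\ixi,\epseta\delta}(M)$ fall out directly: $\ixi^2\omega=0$ is automatic; from \eqref{deltal} and $\delta\omega=\epseta\omega=0$ one gets $\lie_\xi\omega=0$, so $d\ixi\omega=\lie_\xi\omega-\ixi d\omega=0$; and \eqref{deltalambda} together with $\delta\omega=0$ yields $\delta\ixi\omega=2\Lambda\omega$, which combined with $[\epseta,\Lambda]=0$ (obtained by taking the adjoint of $[\ixi,L]=\eps_{\ixi\Phi}=\tfrac12\eps_{\ixi d\eta}=0$) produces $\epseta\delta\ixi\omega=2\Lambda\epseta\omega=0$.

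The core work is the eigenvalue identity. From \eqref{laplambda} applied to the $p$-form~$\omega$, using $\delta\omega=0$ to rewrite $[i_\phi,\delta]\omega=-\delta i_\phi\omega$,
\[
\lap\ixi\omega = 4(\nu+p-n-1)\ixi\omega - 2\delta i_\phi\omega,
\]
so the theorem boils down to $\delta i_\phi\omega=0$. My strategy is to show $d\delta i_\phi\omega=0$ and invoke the $L^2$ identity $\|\delta i_\phi\omega\|^2=\langle i_\phi\omega, d\delta i_\phi\omega\rangle$. First, \eqref{lapl} applied to $\omega$ collapses (using $\epseta\omega=0$ and $\lap\omega=4\nu\omega$) to $\lie_\phi\omega=0$, hence $di_\phi\omega=i_\phi d\omega$. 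Second, feed \eqref{dLambda} with $d\omega$: since $d^2\omega=0$, $\ixi d\omega=0$, and $\delta d\omega=\lap\omega=4\nu\omega$, the identity reduces to $d\Lambda d\omega=4\nu i_\phi\omega-\delta i_\phi d\omega$; but \eqref{deltalambda} applied to $d\omega$ gives $2\Lambda d\omega=\ixi\delta d\omega=4\nu\ixi\omega$, whence $d\Lambda d\omega=2\nu d\ixi\omega=0$ by the second step. Therefore $\delta i_\phi d\omega=4\nu i_\phi\omega$. Since \eqref{lapiphi} immediately yields $\lap i_\phi\omega=4\nu i_\phi\omega$, one concludes
\[
d\delta i_\phi\omega = \lap i_\phi\omega - \delta di_\phi\omega = 4\nu i_\phi\omega - \delta i_\phi d\omega = 0.
\]

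The key obstacle is passing from the cheap fact $\lie_\phi\omega=0$ to the stronger conclusion $\delta i_\phi\omega=0$. The trick is to apply \eqref{dLambda} to $d\omega$ rather than to $\omega$ itself: this simultaneously kills the $d^2$ term and the $\ixi d\omega$ term, leaving a clean identity for $\delta i_\phi d\omega$ that matches the Laplacian of $i_\phi\omega$ exactly, and thus forces $d\delta i_\phi\omega=0$.
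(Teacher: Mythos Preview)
Your proof is correct. The duality reduction to (i) and the verification of the three ``easy'' conditions ($d\ixi\omega=0$, $\ixi^2\omega=0$, $\epseta\delta\ixi\omega=0$) are fine; your route to the third one via $[\epseta,\Lambda]=0$ is a minor variant of the paper's, which instead uses~\eqref{deltal} and~\eqref{lambdal} directly.

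The substantive difference is in proving $\delta i_\phi\omega=0$. The paper argues spectrally: applying $d\delta$ to the identity $\lap\ixi\omega=4\nu'\ixi\omega-2\delta i_\phi\omega$ and using $d\ixi\omega=0$ yields $\lap^2\ixi\omega=4\nu'\lap\ixi\omega$, hence $\lap\delta i_\phi\omega=0$; combined with $\lap i_\phi\omega=4\nu i_\phi\omega$ this gives $4\nu\,\delta i_\phi\omega=0$, forcing a case split with Proposition~\ref{tachi} invoked when $\nu=0$. Your approach instead extracts $\lie_\phi\omega=0$ from~\eqref{lapl}, then feeds~\eqref{dLambda} with $d\omega$ to obtain $\delta i_\phi d\omega=4\nu i_\phi\omega$, and matches this against $\lap i_\phi\omega=4\nu i_\phi\omega$ to conclude $d\delta i_\phi\omega=0$ directly; the $L^2$ pairing then kills $\delta i_\phi\omega$. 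The payoff of your route is uniformity: it handles $\nu=0$ without a separate appeal to Proposition~\ref{tachi}. The paper's route, on the other hand, uses one fewer commutator identity (it does not need~\eqref{lapl} or~\eqref{dLambda}), trading this for the case distinction.
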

\begin{proof}
We will prove just $(i)$ as $(ii)$ can be obtained from $(i)$ by using Hodge
duality and Proposition~\ref{dual}.
    Let $\omega\in \Omega^{p,4\nu}_{ \delta,\epseta , \ixi d}\left(
    M \right)$.
  We write $\nu'$ for $\nu + p -n -1$.
  We have to show that $i_\xi \omega \in \Omega^{p-1,4\nu'}_{d,i_\xi,
  \epseta\delta}(M)$.
First of all, from \eqref{deltal},
we get
\begin{align}
    \label{obvious}
 d\ixi \omega& = \lie_\xi \omega-\ixi
    d\omega =  -\left\{ \epseta,\delta \right\}\omega=  0 .
\end{align}
Next, it is obvious that $\ixi\ixi \omega =0$.
Moreover, by using \eqref{deltal}, \eqref{lambdal} and \eqref{obvious}  we get
\begin{align*}
    \epseta  \delta \ixi \omega = -\lie_\xi \ixi \omega - \delta \epseta \ixi
    \omega = -\ixi d \ixi \omega - \delta \left(\omega -  \ixi \epseta \omega\right) = 0.
\end{align*}
Thus $\ixi\omega \in \Omega^{p-1}_{d,\ixi,\epseta \delta}(M)$.
It is left to prove that $\lap i_\xi \omega = 4\nu' i_\xi \omega$.
From the equation~\eqref{laplambda} for $\left[ \lap,\ixi \right]$, using that
$\delta \omega =0$, we get
\begin{equation*}
\lap i_\xi \omega - i_\xi \lap \omega = -2\delta i_\phi \omega - 4(n-p+1)
i_\xi\omega.
\end{equation*}
Since $\lap\omega =4\nu\omega$ and $\nu-n+p-1 = \nu'$, we get
\begin{equation}
	\label{lapixiomega}
	\lap \ixi \omega = 4\nu' \ixi \omega - 2 \delta i_\phi \omega.
\end{equation}
To get that $\delta i_\phi \omega=0$, we proceed as follows. First we
apply $d\delta$ to \eqref{lapixiomega}. Using that $d\delta$ commutes
with $\lap$, we get
\begin{equation*}
	\lap d\delta i_\xi \omega = 4\nu' d\delta i_\xi \omega.
\end{equation*}
As $d\ixi\omega=0$ by~\eqref{obvious}, we have $d\delta \ixi\omega = \lap
\ixi\omega$. Therefore
\begin{equation}
	\label{laplapixiomega1}
	\lap \lap \ixi \omega = 4\nu' \lap \ixi\omega.
\end{equation}
Comparing \eqref{lapixiomega} and \eqref{laplapixiomega1}, we see that
$\lap\delta i_\phi \omega = 0$, thence also
\begin{equation}
	\label{deltalapiphiomega}
\delta \lap i_\phi \omega=0.
\end{equation}
	Further, using the formula~\eqref{lapiphi} for $\left[ \lap, i_\phi \right]$, we get
\begin{align}
	\label{lapiphiomega}
    \lap i_\phi \omega &= i_\phi \lap \omega - 2 \lie_\xi\omega + 2 \ixi d \omega - 2
 \epseta \delta \omega  = 4\nu i_\phi \omega,
\end{align}
since $\omega\in \Omega^{p,4\nu}_{\delta,\epseta,\ixi d}(M)$ and hence $\lie_\xi \omega=0$.
From ~\eqref{deltalapiphiomega} and \eqref{lapiphiomega} it follows that
 $ 4\nu \delta i_\phi\omega= \delta \lap i_\phi \omega = 0$.
If $\nu\not=0$, this implies that $\delta i_\phi \omega =0$. On the other hand
if $\nu=0$, then
$\omega$ is harmonic. Thus by Proposition~\ref{tachi} the form $i_\phi \omega$
is also harmonic. Hence $\delta i_\phi \omega =0$.
Therefore from \eqref{lapixiomega} we finally get that $\lap \ixi\omega =
4\nu'\ixi\omega$.
\end{proof}
Since $\left\{ \epseta ,\ixi \right\} =\Id$, we get from Theorem~\ref{complicated}
that $\epseta $ and $\ixi$ induce the pair of inverse isomorphisms
\begin{equation}
    \label{isomorphismlambdal}
    \xymatrix{ \Omega^{p, 4\nu}_{ \delta, \epseta , \ixi d}\left( M
\right)\ar@<2pt>[r]^\ixi  &  \Omega^{p-1,
4\nu'}_{ d, \ixi, \epseta \delta}\left( M
\right)\ar@<2pt>[l]^\epseta },
\end{equation}
    where $\nu' = \nu + p -n -1$ and $1\le p\le 2n+1$.
In fact, if $\omega$ is on the left hand side, then $\epseta \omega =0$ and
$\epseta \ixi
\omega = \omega - \ixi \epseta  \omega =\omega$. Similarly, if $\omega$ is on the
right hand side, then $\ixi \omega =0$ and $\ixi \epseta  \omega = \omega$.

\begin{corollary}
    \label{LLambda}
    Let $M$ be a compact Sasakian manifold of dimension $2n +1$.
    Then
    for $\nu\not=0$ and $p\le 2n-1$
\begin{equation}
    \label{isomorphismLLambda}
    \xymatrix{ \Omega^{p, 4\nu}_{\delta, \epseta , \ixi d}\left( M
\right)\ar@<2pt>[r]^-L &  \Omega^{p+2,
4(\nu-p-1+n)}_{ \delta,\epseta ,\ixi d}\left( M \right)\ar@<2pt>[l]^-\Lambda},
\end{equation}
is a pair of isomorphisms such that $\Lambda L = \nu\Id$ and $L\Lambda = \nu\Id$.
Moreover, for $\nu\not=p-n$
\begin{equation}
    \label{isomorphismLambdaL}
    \xymatrix{ \Omega^{p, 4\nu}_{ d, \ixi, \epseta \delta}\left( M
\right)\ar@<2pt>[r]^-L &  \Omega^{p+2,
4(\nu-p+n)}_{ d, \ixi, \epseta \delta}\left( M \right)\ar@<2pt>[l]^-\Lambda},
\end{equation}
is a pair of isomorphisms such that $L\Lambda = (\nu-p+n)\Id$ and $\Lambda L=
(\nu-p + n)\Id$.
\end{corollary}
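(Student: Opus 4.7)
The plan is to factor both $L$ and $\Lambda$ through the operators $d$, $\delta$, $\epseta$, and $\ixi$, whose action on the two families of eigenspaces has already been pinned down by Proposition~\ref{propddelta} and Theorem~\ref{complicated}. The crucial identities are the anticommutator relations $2L = \{d,\epseta\}$ from~\eqref{dl} and $2\Lambda = \{\ixi,\delta\}$ from~\eqref{deltalambda}, because on each of our spaces one of the two summands will automatically vanish.

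For the isomorphism~\eqref{isomorphismLLambda}, I would start with $\omega\in\Omega^{p,4\nu}_{\delta,\epseta,\ixi d}(M)$; since $\epseta\omega=0$, the identity $2L=\{d,\epseta\}$ collapses to $L\omega = \tfrac12\epseta d\omega$. Then Proposition~\ref{propddelta}(i) places $d\omega$ in $\Omega^{p+1,4\nu}_{d,\ixi,\epseta\delta}(M)$, and Theorem~\ref{complicated}(ii) places $\epseta d\omega$ in $\Omega^{p+2,4(\nu-p-1+n)}_{\delta,\epseta,\ixi d}(M)$, which is the desired target. Symmetrically, for $\omega'$ on the right-hand side we have $\delta\omega'=0$, so $\Lambda\omega' = \tfrac12\delta\ixi\omega'$, and the composition Theorem~\ref{complicated}(i) followed by Proposition~\ref{propddelta}(ii) shows $\Lambda\omega'\in\Omega^{p,4\nu}_{\delta,\epseta,\ixi d}(M)$.

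To verify the scalar factors, I would exploit that $\ixi\epseta+\epseta\ixi=\Id$ together with the vanishing hypotheses. For $\omega$ in the left-hand space, since $\ixi d\omega=0$ and $\delta\omega=0$,
\[
\Lambda L\omega = \tfrac14\delta\ixi\epseta d\omega = \tfrac14\delta\,d\omega = \tfrac14\lap\omega = \nu\omega.
\]
For $\omega'$ in the right-hand space, since $d\ixi\omega'=0$ (by Theorem~\ref{complicated}(i)) and $\epseta\omega'=0$,
\[
L\Lambda\omega' = \tfrac14\epseta d\delta\ixi\omega' = \tfrac14\epseta\lap\ixi\omega' = \nu\,\epseta\ixi\omega' = \nu\omega',
\]
where at the last step I used $\lap\ixi\omega'=4\nu\ixi\omega'$ (from Theorem~\ref{complicated}(i) the eigenvalue on $\ixi\omega'$ is $4\nu$). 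Since $\nu\neq 0$, these formulas simultaneously prove bijectivity and give $\Lambda L = L\Lambda = \nu\Id$.

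The second pair~\eqref{isomorphismLambdaL} proceeds in complete analogy, with the roles reversed: for $\omega\in\Omega^{p,4\nu}_{d,\ixi,\epseta\delta}(M)$ we use $d\omega=\ixi\omega=0$ to reduce to $L\omega=\tfrac12 d\epseta\omega$, apply Theorem~\ref{complicated}(ii) followed by Proposition~\ref{propddelta}(i), and analogously for $\Lambda$ using Proposition~\ref{propddelta}(ii) (this is the step that demands $\nu-p+n\neq 0$) and Theorem~\ref{complicated}(i). The scalar computations, carried out in the same spirit, yield $\Lambda L = L\Lambda = (\nu-p+n)\Id$. The main point of care is the bookkeeping of eigenvalue shifts along each composition — a purely routine check once the structural factorizations are in place; no genuine obstacle arises beyond verifying that each intermediate form lies in the kernel of all three operators defining the target space.
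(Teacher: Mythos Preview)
Your proposal is correct and follows essentially the same approach as the paper: both factor $L$ and $\Lambda$ as compositions of $d$, $\delta$, $\epseta$, $\ixi$ via the anticommutator identities~\eqref{dl} and~\eqref{deltalambda}, invoke Proposition~\ref{propddelta} and Theorem~\ref{complicated} (equivalently, the isomorphisms~\eqref{isomorphismddelta} and~\eqref{isomorphismlambdal}) to land in the correct target spaces, and then compute $\Lambda L$ and $L\Lambda$ by the identical chain of reductions. The paper packages this as a pair of commutative diagrams, but the underlying argument and the scalar computations are exactly yours.
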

\begin{proof}
Notice that from \eqref{dl}, we have that for every $\omega \in \Omega^{p,
4\nu}_{\delta, \epseta , \ixi d}\left( M \right)$
\begin{equation}
    \label{Lomega}
2L\omega =\epseta d\omega.
\end{equation}
    Similarly from ~\eqref{deltalambda}, for every $\omega \in \Omega^{p+2, 4\left( \nu -p -1+ n
\right)}_{\delta, \epseta , \ixi d}(M)$, we get
\begin{equation}
    \label{Lambdaomega}
2\Lambda \omega =\delta\ixi \omega.
\end{equation}
Therefore, using the isomorphisms \eqref{isomorphismddelta} and \eqref{isomorphismlambdal},
we can construct the diagram
\begin{equation*}
\xymatrix{
 \Omega^{p,4\nu}_{\delta,\epseta ,\ixi d}(M)
 \ar@<2pt>[r]^-d \ar@/^5ex/[rr]^-{2L} &
 \Omega^{p+1,4\nu}_{d,\ixi,\epseta \delta}(M)
\ar@<2pt>[r]^-\epseta  \ar@<2pt>[l]^-\delta &
\Omega^{p+2,4(\nu-p-1+n)}_{\delta,\epseta ,\ixi d}(M).
\ar@<2pt>[l]^-\ixi \ar@/^5ex/[ll]^-{2\Lambda}
}
\end{equation*}
This shows that $L$ and $\Lambda$ induce isomorphisms between the
spaces in~\eqref{isomorphismLLambda}.
 For every
$\alpha \in \Omega^{p, 4\nu}_{ \delta, \epseta ,\ixi d}(M)$ and every
$\beta \in \Omega^{p+2, 4(\nu-p-1 + n)}_{ \delta, \epseta ,\ixi d}(M)$,
we have
\begin{align*}
    \Lambda L \alpha &=\frac14 \delta \ixi \epseta  d \alpha =\frac14 \delta d
    \alpha =\frac14 \lap
    \alpha = \nu \alpha\\
    L\Lambda \beta &=\frac14 \epseta  d \delta \ixi \beta =\frac14 \epseta  \lap \ixi \beta =
  \nu \epseta \ixi \beta =  \nu \beta.
\end{align*}
The second part of the corollary is proved by the same line of reasoning from the
diagram
\begin{equation*}
\xymatrix{
\Omega^{p,4\nu}_{d,\ixi,\epseta \delta}(M)
\ar@<2pt>[r]^-\epseta  \ar@/^5ex/[rr]^-{2L} &
\Omega^{p+1,4(\nu-p + n)}_{\delta,\epseta ,\ixi d}(M)
\ar@<2pt>[r]^-d \ar@<2pt>[l]^-\ixi &
\Omega^{p+2,4(\nu-p + n)}_{d,\ixi,\epseta \delta}(M)
\ar@<2pt>[l]^-\delta \ar@/^5ex/[ll]^-{2\Lambda}.
}
\end{equation*}
\end{proof}
Now, we are prepared to prove the Hard Lefschetz Theorem at the level of harmonic forms.
\begin{theorem}
    \label{lefschetz}
Let $M$ a compact Sasakian manifold of dimension $2n+1$ and $p\le n$.
Then the map
$$
\epseta L^{n-p} \colon \Omega^p\left( M \right)\to \Omega^{2n+1-p}\left( M
\right)
$$
induces an isomorphism $F_p\colon \Omega^p_\lap\left( M \right)\to \Omega^{2n+1-p}_\lap\left( M \right)$.
Similarly, the map
$$
 \Lambda^{n-p}\ixi \colon \Omega^{2n + 1-p}(M)\to \Omega^p(M)
$$
induces an isomorphism $G_p\colon \Omega^{2n+1-p}_\lap\left( M \right)\to\Omega^p_\lap\left( M \right)$.
Moreover,
\begin{align*}
    F_p G_p & =  (n-p)!^2\cdot I , & G_p F_p & = (n-p)!^2\cdot I.
\end{align*}

\end{theorem}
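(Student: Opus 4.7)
The plan is to realize the source and target of $F_p$ as the $\nu=0$ slices of the two bigraded families introduced above, and then chase a harmonic form through the isomorphisms \eqref{isomorphismLambdaL} and \eqref{isomorphismlambdal}, keeping track both of the position $(p+2k,\nu_k)$ reached and of the scalar accumulated along the way. By Proposition~\ref{characterization}, the identification $\Omega^p_\lap(M)=\Omega^{p,0}_{d,\ixi,\epseta\delta}(M)$ holds because $p\le n$, while $\Omega^{2n+1-p}_\lap(M)=\Omega^{2n+1-p,0}_{\delta,\epseta,\ixi d}(M)$ holds because $2n+1-p\ge n+1$, so everything is fed into the frameworks of Corollary~\ref{LLambda} and Theorem~\ref{complicated}.

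The first step is to check that $F_p$ is well defined. Iterating \eqref{isomorphismLambdaL} starting from $\omega\in\Omega^{p,0}_{d,\ixi,\epseta\delta}(M)$ places $L^k\omega$ in $\Omega^{p+2k,4\nu_k}_{d,\ixi,\epseta\delta}(M)$, where the recursion $\nu_{k+1}=\nu_k+n-p-2k$ solves to $\nu_k=k(n-p-k+1)$. Since $\nu_k\neq 0$ for $1\le k\le n-p$, the hypothesis of Corollary~\ref{LLambda} is met at each step. In particular $L^{n-p}\omega\in\Omega^{2n-p,4(n-p)}_{d,\ixi,\epseta\delta}(M)$, and one further application of $\epseta$, via Theorem~\ref{complicated}(ii), drops the eigenvalue back to zero and lands in $\Omega^{2n+1-p,0}_{\delta,\epseta,\ixi d}(M)=\Omega^{2n+1-p}_\lap(M)$. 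An entirely parallel argument, using Theorem~\ref{complicated}(i) and \eqref{isomorphismlambdal}, shows that $G_p$ is also well defined.

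Next I would compute $G_pF_p$. Because $L^{n-p}\omega$ lies in a subspace annihilated by $\ixi$, the identity \eqref{lambdal} collapses to $\ixi\epseta L^{n-p}\omega=L^{n-p}\omega$, so $G_pF_p\omega=\Lambda^{n-p}L^{n-p}\omega$. A short calculation using Corollary~\ref{LLambda} with the source parameters $(p+2(j-1),4\nu_{j-1})$ gives $\Lambda L^j\omega=\nu_j\, L^{j-1}\omega$, and hence
\begin{equation*}
\Lambda^{n-p}L^{n-p}\omega=\left(\prod_{j=1}^{n-p}\nu_j\right)\omega=\left(\prod_{j=1}^{n-p}j(n-p-j+1)\right)\omega=((n-p)!)^2\,\omega.
\end{equation*}
The computation of $F_pG_p$ is symmetric: writing $\alpha=\ixi\omega\in\Omega^{2n-p,4(n-p)}_{d,\ixi,\epseta\delta}(M)$, the analogous scalar identity $L\Lambda^{k+1}\alpha=(k+1)(n-p-k)\Lambda^k\alpha$ telescopes to $L^{n-p}\Lambda^{n-p}\alpha=((n-p)!)^2\alpha$, and then $\epseta\ixi\omega=\omega$ (which holds since $\epseta\omega=0$ on $\Omega^{\cdot,\cdot}_{\delta,\epseta,\ixi d}(M)$) finishes the job. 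Both compositions being nonzero scalar multiples of the identity, $F_p$ and $G_p$ are isomorphisms.

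The main obstacle I expect is the bookkeeping: at each step one must verify that the intermediate form sits in precisely the subspace where Corollary~\ref{LLambda} or Theorem~\ref{complicated} applies, and that the resulting string of scalars multiplies to $((n-p)!)^2$. Once the closed form $\nu_k=k(n-p-k+1)$ is in hand, however, the telescoping is immediate, and the only auxiliary fact needed to glue $\epseta$ with $\ixi$ at the two ends is the anticommutation relation \eqref{lambdal}.
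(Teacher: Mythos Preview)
Your proposal is correct and follows essentially the same approach as the paper: identify the harmonic spaces via Proposition~\ref{characterization}, climb the chain $\Omega^{p+2k,4\nu_k}_{d,\ixi,\epseta\delta}(M)$ with $\nu_k=k(n-p-k+1)$ using Corollary~\ref{LLambda}, cap it off with \eqref{isomorphismlambdal}, and telescope the scalars to $((n-p)!)^2$. Your treatment of $F_pG_p$ is in fact a bit more explicit than the paper's, which simply says ``similarly'' for that direction.
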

\begin{proof}
    We have by Proposition~\ref{characterization} that $\Omega^p_\lap\left(
    M \right) = \Omega^{p,0}_{d,\ixi,\epseta \delta}\left( M
    \right)$ and  $\Omega^{2n+1-p}_\lap\left( M \right)=
    \Omega^{2n+1-p,0}_{ \delta, \epseta , \ixi d}(M)$.
    Let us define the numbers $\nu_{p,k}$ by
    \begin{align}
        \label{nuprod}\nu_{p,k}&:= k(n-p -k +1),\ k\in \mathbb{Z}.
    \end{align}
    To make the notation less heavy, we will write $\nu_k$ instead of
    $\nu_{p,k}$ along the proof.
    It is easy to check that $\nu_{k+1} = \nu_k -(p+2k)+ n$,
    and that  $\nu_k\not=0$ for $1\le k\le n-p$. Thus by
    Corollary~\ref{LLambda}, the operators $L$ and $\Lambda$ induce
    isomorphisms between $\Omega^{p + 2k, 4\nu_k}_{ d, \ixi,
    \epseta \delta}(M)$ and $\Omega^{p + 2(k+1), 4\nu_{k+1}}_{d,\ixi,
    \epseta \delta}(M)$.
    Moreover, for $\omega\in \Omega^{p+2k, 4\nu_k}_{ d,\ixi,
    \epseta \delta}(M)$, it holds
    \begin{equation}
        \label{LambdaLomega}
        \Lambda L \omega = \nu_{k+1}\omega.
    \end{equation}
        Note that $\nu_0=0$ and $\nu_{n-p} = n-p$.
    Then we get the chain of isomorphisms
    \begin{equation}
        \label{chain}
\xymatrix{
\Omega^{p,0}_{d, \ixi, \epseta \delta}(M)\ar[r]^-{L} &
\Omega^{p+2,4\nu_1}_{ d, \ixi, \epseta \delta}(M) \ar[r]^-L &
\dots \ar[r]^-L &
\Omega^{2n-p,4( n-p)}_{ d,\ixi, \epseta \delta}(M)
}.
    \end{equation}
    Thus
    $L^{n-p}$ induces an isomorphism between the vector spaces $\Omega^{p,0}_{ d,
    \ixi, \epseta \delta}(M)$ and $\Omega^{2n-p, 4\left( n-p \right)}_{ d,\ixi,
    \epseta \delta}(M)$.
 From \eqref{isomorphismlambdal}, we have that $\epseta $ and $\ixi$ induce
    two mutually inverse maps between $\Omega^{2n-p,4(n-p)}_{ d,
    \ixi, \epseta \delta}\left( M \right) $ and
    $\Omega^{2n-p+1,0}_{\delta,\epseta ,\ixi d}(M) =
    \Omega^{2n-p+1}_\lap(M)$.
    This shows, that $F_p$ is an isomorphism between $\Omega^p_\lap\left(
    M \right)$ and $\Omega^{2n-p+1}_\lap(M)$. Similarly, one can check that
    also $G_p\colon \Omega^{2n-p+1}_\lap(
    M)\to \Omega^{p}_\lap(M)$ is an isomorphism.
    Iterating \eqref{LambdaLomega}, we see that for all $\omega\in
    \Omega^p_\lap(M)$
    \begin{align*}
        G_p F_p \omega& = \Lambda^{n-p}L^{n-p}\omega = \left(
        \prod_{k=1}^{n-p}\nu_k \right)\omega = \left(
        \prod_{k=1}^{n-p} k(n-p -k + 1)
        \right)\omega\\& = (n-p)!^2 \omega.
    \end{align*}
    Similarly, for $\omega\in\Omega^{2n-p+1}_\lap(M)$, we have $F_pG_p\omega
    =  (n-p)!^2 \omega$.
\end{proof}
    From~\eqref{chain} it follows that the
    spaces $\Omega^{p+2k,4\nu_{p,k}}_{d,\ixi, \epseta \delta}(M)$, where
    $\nu_{p,k}$ are defined by~\eqref{nuprod},
are isomorphic to $\Omega^p_\lap(M)$ for all $1\le k\le n-p$. Thus, if
$H^p\left( M \right)\not=0$, the same is true for
$\Omega^{p+2k,4\nu_{p,k}}_{d,\ixi,\epseta \delta} (M)$.
The following proposition shows that the only pairs $(p,\nu)$ such that
$\Omega^{p,4\nu}_{d,\ixi,\epseta \delta}(M)\not=0$ are necessarily of the form $(q+2k,
\nu_{q,k})$.

    \begin{proposition}
    \label{vanishing}Let $M$ be a compact Sasakian manifold of dimension
    $2n+1$.
    \begin{enumerate}[(i)]
        \item If
            $\Omega^{p,4\nu}_{d,\ixi,\epseta \delta}(M)\not=0$
            then $\nu = \nu_{p-2k,k}$ for some integer $k$ such that
	    \begin{equation*}
	    	   \max\{0, (p-n)/2\}\le k \le p/2.
	   \end{equation*}
        \item If $\Omega^{p,4\nu}_{\delta,\epseta ,\ixi
            d}(M)\not=0$ then $\nu = \nu_{p+1-2k,k}$ for some
            integer $k$ such that
	    \begin{equation*}
	    	    \max\{ 0, (p+1-n)/2 \} \le k\le
	                (p+1)/2.
		\end{equation*}
    \end{enumerate}
\end{proposition}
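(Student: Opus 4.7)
The plan is to prove (i) by induction on $p$ and to derive (ii) from (i) via Hodge duality (Proposition~\ref{dual}).

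The base cases $p=0$ and $p=1$ are handled directly. A nonzero element of $\Omega^{0,4\nu}_{d,\ixi,\epseta\delta}(M)$ is a closed $0$-form, hence a nonzero constant, which forces $\nu=0$. For $p=1$, if $\nu>0$ then Proposition~\ref{propddelta}(ii) would give a nonzero element of $\Omega^{0,4\nu}_{\delta,\epseta,\ixi d}(M)$; but any function $f$ there satisfies $\epseta f=\eta f=0$ and therefore $f=0$, a contradiction. In both cases $\nu=0=\nu_{p,0}$ with admissible $k=0$.

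For the inductive step with $p\ge 2$, assume the assertion in all smaller degrees and fix a nonzero $\omega\in\Omega^{p,4\nu}_{d,\ixi,\epseta\delta}(M)$. If $\nu=0$, Proposition~\ref{characterization} forces $p\le n$ and the conclusion holds with $k=0$. If $\nu>0$, I apply the second isomorphism of Corollary~\ref{LLambda} with source pair $(p-2,\,\nu+p-n-2)$: the associated scalar equals $(\nu+p-n-2)-(p-2)+n=\nu\neq 0$, so
\[
L\colon\Omega^{p-2,\,4(\nu+p-n-2)}_{d,\ixi,\epseta\delta}(M)\xrightarrow{\;\sim\;}\Omega^{p,\,4\nu}_{d,\ixi,\epseta\delta}(M)
\]
is an isomorphism whose inverse is $\nu^{-1}\Lambda$. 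Hence $\Lambda\omega$ is nonzero in the left-hand space, and the induction hypothesis supplies an integer $k''$ with $\max\{0,(p-n-2)/2\}\le k''\le(p-2)/2$ and $\nu+p-n-2=\nu_{p-2-2k'',k''}$. A direct computation then yields the polynomial identity
\[
\nu_{p-2(k''+1),\,k''+1}=\nu_{p-2-2k'',\,k''}+(n-p+2),
\]
so setting $k=k''+1$ gives $\nu=\nu_{p-2k,k}$ with $k\in[\max\{1,(p-n)/2\},\,p/2]\subseteq[\max\{0,(p-n)/2\},\,p/2]$.

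Part (ii) follows by Hodge duality. By Proposition~\ref{dual}, a nonzero element of $\Omega^{p,4\nu}_{\delta,\epseta,\ixi d}(M)$ Hodge-dualizes to a nonzero element of $\Omega^{2n+1-p,4\nu}_{d,\ixi,\epseta\delta}(M)$, so (i) supplies $k'\in[\max\{0,(n+1-p)/2\},\,(2n+1-p)/2]$ with $\nu=\nu_{(2n+1-p)-2k',k'}=k'(p-n+k')$. Setting $k=k'+p-n$, the relation $\nu_{p+1-2k,k}=k(n-p+k)=k'(p-n+k')=\nu$ is immediate, and the range $[\max\{0,(p+1-n)/2\},\,(p+1)/2]$ follows from that of $k'$. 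The principal difficulty is the algebraic bookkeeping around the quadratic indexing $\nu_{q,k}=k(n-q-k+1)$; once the polynomial identity displayed above is verified, both the induction and the duality transfer are routine.
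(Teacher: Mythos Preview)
Your argument for (i) is the paper's induction verbatim, with the harmless addition of an explicit $p=1$ base case; the paper starts the induction at $p=0$ and lets the step absorb $p=1$ implicitly (for $\nu\neq 0$ the isomorphism forces $\Omega^{1,4\nu}_{d,\ixi,\epseta\delta}(M)\cong\Omega^{-1,\dots}(M)=0$).

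For (ii), however, you take a genuinely different route. The paper does not use Hodge $*$-duality here; it invokes the $d/\delta$ isomorphism~\eqref{isomorphismddelta}, which identifies $\Omega^{p,4\nu}_{\delta,\epseta,\ixi d}(M)$ with $\Omega^{p+1,4\nu}_{d,\ixi,\epseta\delta}(M)$ for $\nu\neq 0$. This delivers the conclusion of (ii) at once with the \emph{same} $k$ and no reindexing, since the target statement of (i) at degree $p+1$ reads exactly $\nu=\nu_{(p+1)-2k,k}$ with $k$ in the required range. Your Hodge-dual approach also works but costs the substitution $k=k'+p-n$ and the attendant bookkeeping. There is one small omission in that bookkeeping: the claim that ``the range of $k$ follows from that of $k'$'' gives $k\le(p+1)/2$ and $k\ge(p+1-n)/2$, but not $k\ge 0$ directly. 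You need the extra observation that $\nu=k'(k'+p-n)=k'k\ge 0$; when $p\le n$ the bound on $k'$ forces $k'\ge 1$, hence $k\ge 0$, while when $p\ge n+1$ the case $k'=0$ gives $k=p-n\ge 1$. With that sentence added, your proof of (ii) is complete.
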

\begin{proof}
Note that the second part of the proposition follows from the first part and the
isomorphism~\eqref{isomorphismddelta}.
We will prove the first part by induction on $p$. Suppose
$p=0$ and $\Omega^{0,4\nu}_{d,\ixi, \epseta \delta}(M)\not=0$.
Let $f \in \Omega^{0,4\nu}_{d,\ixi,\epseta \delta}(M)$, $f\not=0$. Then
$d f =0$ and thus $f$ is a  harmonic function. Since $ 4\nu$ is a $\lap$-eigenvalue of
$f$, we get that necessarily $\nu = 0$. Thus $\nu=\nu_{0,0}$.

Suppose we proved the claim for all $p'<p$ and $\Omega^{p,4\nu}_{
d,\ixi,\epseta \delta }(M)\not=0$.
If $\nu=0$, then by Proposition~\ref{characterization}
we get that $p\le n$. Therefore $0$ is in the allowed range of values for
$k$, and  we are done since $\nu_{p,0} = 0$.
Now consider the case $\nu\not=0$. From Corollary~\ref{LLambda}, we get that the
spaces $\Omega^{p-2, 4(\nu + p-2 - n)}_{ d,\ixi,\epseta \delta}(M)$ and
$\Omega^{p, 4\nu}_{d,\ixi,  \epseta \delta}(M)$ are isomorphic. From the
induction assumption, we get that
$$
\nu + p -2 - n = \nu_{(p-2)-2k',k'}
$$
for some $k'$ between $\max\{0, (p-2-n)/2\}$ and $(p-2)/2$.
Define $k=k'+1$. Then
\begin{align*}
    \nu &= \nu_{p-2k,k-1} -p + 2 + n\\
    &= (k-1)(n-p +k + 2) +n-p+2 \\
    &
    = k(n -p +k +1)\\& = \nu_{p-2k,k}.
\end{align*}
Moreover $k$ lies between $\max\{ 1, (p-n)/2\}$ and $p/2$. This completes the
induction argument.
\end{proof}

\begin{figure}
    \label{plot}
\xygraph{
!{<0ex,0ex>;<0.96cm,0cm>:<0cm,0.96cm>::}
!{(0,0)} :@{.}_(0){0}|(0){\gb}_(1){1}[r]|(1){\gb}
:@{.}_(1){2}|(1){\gb}[r]
:@{.}_(1){3}|(1){\gb}[r]
:@{.}_(1){4}|(1){\gb}[r]
:@{.}_(1){5}|(1){\gb}[r]
:@{-}_(1){6}|(1){\gc}[r]
:@{.}_(1){7}|(1){\gc}[r]
:@{.}_(1){8}|(1){\gc}[r]
:@{.}_(1){9}|(1){\gc}[r]
:@{.}_(1){10}|(1){\gc}[r]
:@{.}_(1){11}|(1){\gc}[r]
:@{.>}_(0.8){p}[r]
[llllllllllll]:@{.}^(1){1}[u]
:@{.}^(1){2}[u]
:@{.}^(1){3}[u]
:@{.}^(1){4}[u]
:@{.}^(1){5}[u]
:@{.}^(1){6}[u]
:@{.}^(1){7}[u]
:@{.}^(1){8}[u]
:@{.}^(1){9}[u]
:@{.}[u]
:@{.>}^(.51){\nu}[u]
[ddddddddddd]:@{-}|(1){\gc}[uuuuur]
:@{-}|(1){\gb}[r]
:@{-}|(1){\gc}[uuur]
:@{-}|(1){\gb}[r]
:@{-}|(1){\gc}[ur]
:@{-}|(1){\gb}[r]
:@{-}|(1){\gc}[dr]
:@{-}|(1){\gb}[r]
:@{-}|(1){\gc}[dddr]
:@{-}|(1){\gb}[r]
:@{-}[dddddr]
[llllllllll]
:@{-}|(1){\gc}[uuuur]
:@{-}|(1){\gb}[r]
:@{-}|(1){\gc}[uur]
:@{-}|(1){\gb}[r]
:@{-}|(1){\gc}[r]
:@{-}|(1){\gb}[r]
:@{-}|(1){\gc}[ddr]
:@{-}|(1){\gb}[r]
:@{-}|(1){\gc}[ddddr]
[llllllll]
:@{-}|(1){\gc}[uuur]
:@{-}|(1){\gb}[r]
:@{-}|(1){\gc}[ur]
:@{-}|(1){\gb}[r]
:@{-}|(1){\gc}[dr]
:@{-}|(1){\gb}[r]
:@{-}|(1){\gc}[dddr]
[llllll]
:@{-}|(1){\gc}[uur]
:@{-}|(1){\gb}[r]
:@{-}|(1){\gc}[r]
:@{-}|(1){\gb}[r]
:@{-}|(1){\gc}[ddr]
[llll]
:@{-}|(1){\gc}[ur]
:@{-}|(1){\gb}[r]
:@{-}|(1){\gc}[dr]
}
\caption{}
\end{figure}
In Figure~1, we give an illustration of what happens for  a compact
Sasakian manifold  $M$ of dimension $11$.
In the picture the circles mark the places $(p,\nu)$ such that
$\Omega^{p, 4\nu}_{ \delta, \epseta ,\ixi d}(M)$ can be non-zero.  Similarly, the squares mark the points $(p,\nu)$ such that the spaces
$\Omega^{p,4\nu}_{d,\ixi, \epseta \delta}(M)$ can be non-zero.
The horizontal segments with the circle at the left
edge and the square at the right edge represent the isomorphisms of
the type \eqref{isomorphismddelta}, and all
the other segments correspond to the isomorphisms of the type~\eqref{isomorphismlambdal}.
Thus, we can see that if we start with a harmonic $p$-form $\omega$ for $p\le n$
and
move it along the segments representing
the isomorphisms of the types \eqref{isomorphismddelta} and
\eqref{isomorphismlambdal}, we get in the intermediate steps $\lap$-eigenforms with
non-zero eigenvalues, and we will eventually end up with the  harmonic
$(2n+1-p)$-form $F_p \omega$.

\section{Hard Lefschetz isomorphism in de Rham cohomology}
\label{nometric}

Let $M$ be a compact Sasakian manifold of dimension $2n+1$. Let us denote by $\pr$ the
orthogonal projection from $\Omega^*(M)$ onto $\Omega^*_\lap(M)$. Then we can
define
\begin{equation}
	\label{lefp}
	\begin{aligned}
		\lef_p\colon 	H^p(M) &\to H^{2n+1-p}(M)\\
	\left[\, \beta \,\right] &\mapsto \left[\, F_p\,\pr\,  \beta \,\right].
\end{aligned}
\end{equation}
	Due to Hodge theory and Theorem~\ref{lefschetz} the map $\lef_p$ is an
isomorphism.

Suppose $\left( \eta, g' \right)$ is another Sasakian structure on $M$ with
the same contact form~$\eta$.
Denote by $\lap'$ the corresponding Laplacian.
For a closed $p$-form $\beta$
it can happen that
\begin{equation*}
	\pr \beta \not= \prprime \beta.
\end{equation*}
Thus it is not \emph{a priori} clear whether $\lef_p$ depends on the full Sasakian structure
or just on its contact form. Note that this is rather
different from the K\"ahler case, where it is obvious from the definition of the
Lefschetz map that it is fully determined by the symplectic structure and does
not depend on chosen K\"ahler metric.

The aim of this section is to show that $\lef_p$ is uniquely determined by the
contact structure of $M$. Note that from Proposition~\ref{tachi}
it follows that for any Sasakian metric $g'$  on $(M,\eta)$ and any closed $p$-form
$\beta$ with $p\le n$, we have
\begin{equation*}
	\prprime \beta \in \Omega^p_{ \ixi,d}(M).	
\end{equation*}
Moreover, since $d$ commutes with $L$, by applying~\eqref{dl} and
Theorem~\ref{lefschetz} to $(M,g')$ we get
\begin{equation}
	\label{Lkprprimebeta}
	L^{n-p+1} \prprime \beta =\frac12 d\left( \epseta L^{n-p} \prprime \beta
	\right) = \frac12 d\left( F_p \prprime \beta \right) =0.	
\end{equation}
Indeed, we will prove that given an arbitrary $p$-form $\gamma$ with $p\le n$ such that
\begin{align}
	\label{conditionsbeta}
	\ixi \gamma &= 0, & d\gamma &= 0, &
	L^{n-p+1}\gamma &= 0 ,
\end{align}
the cohomology classes of $\epseta L^{n-p}\gamma$ and of $\epseta L^{n-p}\pr\gamma$ are equal.
As a consequence
we will get that
\begin{equation*}
	\left[ \epseta L^{n-p} \prprime \beta \right] = \left[ \epseta L^{n-p}\pr\prprime
	\beta \right] = \left[ \epseta L^{n-p} \pr \beta \right],
\end{equation*}
since $\prprime \beta$ satisfies~\eqref{conditionsbeta} and
$\pr\prprime\beta=\pr\beta$ as $[\prprime\beta]=[\beta]$.

We start by recalling some facts on Hodge theory for compact manifolds. Let $M$ be a compact
Riemannian manifold. Denote by  $G$ the Green operator for the de Rham complex
$\Omega^*(M)$ of $M$. Then
\begin{align}
	\label{greenoperator}
\Id - \lap G &= \pr, & \Id - G\lap & = \pr, & dG &= Gd, & \delta G & = G\delta.
\end{align}

\begin{proposition}
	\label{greenliexi}
	Let $(M,g)$ be a compact Riemannian manifold and $\xi$ a Killing vector
	field on $M$. Then  $\lie_\xi$ commutes with the Green operator
	$G$.
\end{proposition}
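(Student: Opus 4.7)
The plan is to prove $[\lie_\xi, G] = 0$ by showing that the operator $[\lie_\xi, G]$ has its image contained in $\Omega^*_\lap(M)$ and simultaneously in the $L^2$-orthogonal complement of $\Omega^*_\lap(M)$. Both assertions will reduce to the preliminary commutation $[\lie_\xi, \pr] = 0$, which is where the Killing hypothesis enters the argument.

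First I would establish $[\lie_\xi, \pr] = 0$. The local flow $\phi_t$ of the Killing vector field $\xi$ consists of isometries, so the pullback $\phi_t^*$ preserves the global $L^2$ inner product on $\Omega^*(M)$. Since $\xi$ is Killing, $\lie_\xi$ commutes with $\lap$, hence $\phi_t^*$ maps $\Omega^*_\lap(M)$ into itself. An $L^2$-orthogonal operator that preserves $\Omega^*_\lap(M)$ must also preserve its orthogonal complement; differentiating at $t = 0$ then yields $[\lie_\xi, \pr] = 0$.

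Next, using $\lap G = \Id - \pr$ from~\eqref{greenoperator} together with $[\lie_\xi, \lap] = 0$ and $[\lie_\xi, \pr] = 0$, a direct computation gives
\begin{equation*}
\lap\,[\lie_\xi, G] = [\lie_\xi, \lap G] = [\lie_\xi, \Id - \pr] = 0,
\end{equation*}
so the image of $[\lie_\xi, G]$ lies in $\Omega^*_\lap(M)$. For the reverse inclusion, I would invoke the standard normalization $\pr G = 0$ (the Green operator vanishes on harmonic forms), from which
\begin{equation*}
\pr\,[\lie_\xi, G] = \pr\lie_\xi G - \pr G \lie_\xi = \lie_\xi \pr G = 0,
\end{equation*}
so the image also lies in the orthogonal complement of $\Omega^*_\lap(M)$. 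The two inclusions together force $[\lie_\xi, G] = 0$.

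The only nontrivial step is the commutation $[\lie_\xi, \pr] = 0$; everything else is formal manipulation using the identities in~\eqref{greenoperator}. This is also precisely where the Killing hypothesis is essential — without it, the flow of $\xi$ would fail to preserve the Hodge decomposition and the whole argument would collapse.
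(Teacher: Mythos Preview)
Your argument is correct, but it differs from the paper's in both of its two steps.

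For $[\lie_\xi,\pr]=0$, you argue geometrically via the isometric flow of $\xi$, obtaining only that $\lie_\xi$ and $\pr$ commute. The paper instead proves the two stronger statements $\lie_\xi\pr=0$ and $\pr\lie_\xi=0$ separately: the first by the classical fact that Killing fields annihilate harmonic forms, the second by observing that $\lie_\xi^*=-\lie_\xi$ (so $\lie_\xi\beta$ is orthogonal to every harmonic form). For the passage to $G$, you show that $[\lie_\xi,G]$ has image both in $\Omega^*_\lap(M)$ (from $\lap[\lie_\xi,G]=0$) and in its orthogonal complement (from $\pr G=0$), forcing it to vanish. The paper instead reads off $\lie_\xi=\lie_\xi\lap G$ and $\lie_\xi=G\lap\lie_\xi$ from the two vanishings above and runs a sandwich: $G\lie_\xi=G\lie_\xi\lap G=G\lap\lie_\xi G=\lie_\xi G$.

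Your flow argument is the more conceptual route; the paper's algebraic one yields the slightly sharper intermediate facts $\lie_\xi\pr=0=\pr\lie_\xi$ and stays within the identities~\eqref{greenoperator}, not needing the extra (though entirely standard) normalization $\pr G=0$.
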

\begin{proof}

	By Theorem~3.7.1 in \cite{goldberg}, we have that $\lie_\xi\omega =0$ for every harmonic form
	$\omega$. Thus $\lie_\xi \pr = 0$.
Let us show that $\pr \lie_\xi=0$.
	Using \eqref{deltal} and that $d^* = \delta$, $\ixi^*=\epseta $, we get
	\begin{equation*}
		\lie_\xi^* = \left\{ d, \ixi \right\}^* = \left\{ \epseta , \delta
		\right\} = - \lie_\xi.
	\end{equation*}
	Hence for any $\beta\in \Omega^k(M)$ and $\omega \in
	\Omega^k_{\lap}(M)$  we have
	\begin{align*}
\left( \lie_\xi \beta , \omega \right)& = - \left( \beta, \lie_\xi \omega \right)
=0,
	\end{align*}
	where $(\ ,\ )$ denotes  the usual global scalar product
	of
		differential forms on a compact Riemannian manifold.
	This shows that $\lie_\xi \beta$ is orthogonal to the subspace
	$\Omega^k_{\lap}(M)$ of $\Omega^k(M)$ and thus  $\pr \lie_\xi \beta =0$.
	Now, from~\eqref{greenoperator} we get
	\begin{align*}
&\lie_\xi - \lie_\xi \lap G =\lie_\xi \pr= 0, && \lie_\xi - G \lap \lie_\xi =\pr
\lie_\xi =0.
	\end{align*}
	We know from Theorem~\ref{commutators} that  $\lie_\xi$ commutes with
	$d$ and $\delta$, and thus also with
	$\lap = \left\{ d,\delta \right\}$.
	Hence
\begin{align*}
G \lie_\xi = G\lie_\xi \lap G = G \lap \lie_\xi G = \lie_\xi G.
\end{align*}
\end{proof}
Let us introduce the auxiliary map
\begin{align*}
	\aux_p \colon \Omega^{p-1}(M) & \to \Omega^{2n-p+1}(M)\\
	\alpha & \mapsto (n-p+1) L^{n-p} di_\phi d \alpha + L^{n-p+1}\lap
	\alpha.
\end{align*}
We will  study the interplay between $\aux_p$ and $\delta$, $d$, $\lap$.
The following technical lemma is needed.

\begin{lemma}
	\label{deltaLk}
Let $M$ be a Sasakian manifold. Then for any $k\ge 1$, we have
\begin{align}\label{eqdeltaLk}
	\left[ \delta, L^k \right] & = -kL^{k-1}\lie_\phi + 2k\epseta L^{k-1}(n-\deg -
	(k-1)).
\end{align}
\end{lemma}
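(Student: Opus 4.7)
The plan is induction on $k\ge 1$. The base case $k=1$ asserts $[\delta, L] = -\lie_\phi + 2\epseta(n-\deg)$, which I would extract directly from identities already available in the paper. Using $2L = d\epseta + \epseta d$ from~\eqref{dl}, I expand $2[\delta, L] = [\delta, d\epseta] + [\delta, \epseta d]$ and apply $\{d,\delta\} = \lap$ together with $\{\delta, \epseta\} = -\lie_\xi$ from~\eqref{deltal} to reduce this to $[\lap, \epseta] + [d, \lie_\xi]$. The second commutator vanishes because $\lie_\xi$ commutes with $d$ (Theorem~\ref{commutators}), and~\eqref{lapl} gives exactly the asserted formula for $k=1$.

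For the inductive step, assuming the formula for $k-1$, I would use the Leibniz rule
\begin{equation*}
  [\delta, L^k] = [\delta, L^{k-1}]\, L + L^{k-1}\, [\delta, L]
\end{equation*}
and substitute both the inductive hypothesis and the base case. To combine the resulting terms I need three commutation facts with $L$. First, $[\epseta, L] = 0$ by graded commutativity of the wedge product. Second, $L$ shifts degree by $2$, so $(n-\deg-(k-2))\,L = L\,(n-\deg-k)$. Third, $[\lie_\phi, L] = 0$; this I would derive by observing that $\lie_\phi$ is a graded derivation of degree $1$, hence $[\lie_\phi, L] = \eps_{\lie_\phi \Phi}$, and then checking via $g(\phi X,\phi Y)=g(X,Y)-\eta(X)\eta(Y)$ and $\phi^2 = -\Id + \eta\otimes\xi$ that $i_\phi d\eta = 0$, whence $\lie_\phi\Phi = -\tfrac12 d\, i_\phi d\eta = 0$.

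With these in place, the two $\lie_\phi$-terms collapse as $-(k-1)L^{k-2}\lie_\phi L - L^{k-1}\lie_\phi = -kL^{k-1}\lie_\phi$, while the two $\epseta$-terms combine, after shifting the degree operator through $L$ as above, to
\begin{equation*}
  2\epseta L^{k-1}\bigl[(k-1)(n-\deg - k) + (n-\deg)\bigr] = 2k\,\epseta L^{k-1}\bigl(n-\deg - (k-1)\bigr),
\end{equation*}
exactly as required. I expect the only delicate point to be this degree-shift bookkeeping: one must remember that the factor $(n-\deg-(k-2))$ coming from the inductive hypothesis is applied \emph{after} a copy of $L$, and therefore must first be rewritten as $L\,(n-\deg-k)$ before the two $\epseta$-contributions can be added. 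Everything else is routine algebraic manipulation using derivation properties already recorded in the preceding sections.
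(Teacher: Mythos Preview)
Your proof is correct. Both the base case and the inductive step go through as you describe; in particular your derivation of $[\delta,L]$ from~\eqref{lapl} via $2L=\{d,\epseta\}$ and $\{\delta,\epseta\}=-\lie_\xi$ is valid, and the degree-shift bookkeeping in the inductive step is handled accurately.

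The paper proceeds a bit differently on both fronts. For the base case it obtains $[L,\delta]$ by \emph{dualizing} the identity~\eqref{dLambda} for $[d,\Lambda]$, using that $i_\phi^*=-i_\phi$; this is quicker than your route through~\eqref{lapl} but requires knowing the adjoint of $i_\phi$. For the passage to general $k$, instead of induction the paper invokes the closed formula
\[
a\,b^k=\sum_{j=0}^k(-1)^j\binom{k}{j}b^{\,k-j}\ad_b^j(a),
\]
with $a=\delta$ and $b=L$, and then computes $\ad_L^2(\delta)=-4\,\epseta L$ (hence $\ad_L^j(\delta)=0$ for $j\ge3$) from the same fact $[\lie_\phi,L]=0$ and $[L,\deg]=-2L$. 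This packages the whole iteration into a single evaluation of two iterated commutators, avoiding the per-step degree shift you track; your induction, on the other hand, is more elementary and makes no appeal to the adjoint of $i_\phi$ or to the binomial-type identity. The underlying ingredients ($[\delta,L]$, $[\lie_\phi,L]=0$, and the degree shift through $L$) are the same in both arguments.
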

\begin{proof}
 For any two linear endomorphisms $a$ and $b$ of an arbitrary vector space
 $V$, we denote by $\ad_b(a)$ their
commutator $\left[ b,a \right]$.
It can be checked (e.g. by induction)
that
\begin{equation}
	\label{binomial}
	ab^k = \sum_{j=0}^k (-1)^j \binom{k}{j} b^{k-j} \ad_b^j (a).
\end{equation}
We will apply formula \eqref{binomial} to the case $a = \delta$ and $b = L$.
Dualizing~\eqref{dLambda}, since $i_\phi^* = -i_\phi$, we get
\begin{equation*}
	\ad_{L}(\delta) = \left[ L, \delta \right]= \left[ d, -i_\phi \right] -
	2 \epseta (n -\deg) = \lie_\phi - 2\epseta (n-\deg ).
\end{equation*}
Moreover, since $\lie_\phi$ is a derivation and $L= \eps_\Phi$, we get
\begin{align*}
\left[ \lie_\phi, L \right] = \eps_{\lie_\phi \Phi}= 0.
\end{align*}
Therefore
\begin{equation*}
	\ad^2_L(\delta) = \left[ L,\lie_\phi - 2\epseta n +  2\epseta\deg \right] = 2\epseta \left[ L,
	\deg
	\right] = -4  \epseta L.
\end{equation*}
Thus $\ad_L^j (\delta ) = 0$ for all $j\ge 3$.
Now the claim of the lemma follows from~\eqref{binomial}.
\end{proof}

\begin{theorem}
	\label{aux}
	Let $\left( M,\eta,g \right)$ be a compact Sasakian manifold of dimension $2n+1$,
	$1\le p\le n$, and
	$\alpha\in \Omega^{p-1}_{\lie_\xi, \delta}(M)$.
	Then $\aux_p \alpha$ is coclosed and
	\begin{equation}
		\label{num4}
	\lap \aux_p \alpha = \aux_p \lap \alpha.
	\end{equation}
\end{theorem}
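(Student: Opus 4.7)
The plan is to prove both statements by first establishing the compact identity
\begin{equation*}
\aux_p\alpha = \delta L^{n-p+1} d\alpha
\end{equation*}
valid for any $\alpha \in \Omega^{p-1}(M)$ with $\delta\alpha = 0$, and then deducing the theorem from it using only $[\lap, d] = [\lap,\delta] = 0$ and $\delta^2 = 0$.

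To prove the identity I would apply Lemma~\ref{deltaLk} with $k = n - p + 1$ to the $p$-form $d\alpha$. The term $L^{n-p+1}\delta d\alpha$ coming from moving $\delta$ past $L^{n-p+1}$ collapses to $L^{n-p+1}\lap\alpha$, since $\delta\alpha = 0$ forces $\delta d\alpha = \lap\alpha$. In the commutator $[\delta, L^{n-p+1}]d\alpha$ supplied by the lemma, the coefficient of the $\epseta$-summand equals $n - \deg(d\alpha) - (n-p) = 0$, so that summand disappears; the $\lie_\phi$-summand is $-(n-p+1)L^{n-p}\lie_\phi d\alpha$, and the definition $\lie_\phi = i_\phi d - di_\phi$ together with $d^2 = 0$ yields $\lie_\phi d = -d i_\phi d$, turning this piece into $(n-p+1)L^{n-p}\,d i_\phi d\alpha$. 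Summing the two contributions reproduces the defining expression of $\aux_p\alpha$.

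Once the identity is available, coclosedness is immediate: $\delta\aux_p\alpha = \delta^2 L^{n-p+1}d\alpha = 0$. For the intertwining relation, since $\lap$ commutes with both $d$ and $\delta$,
\begin{equation*}
\lap\aux_p\alpha - \aux_p\lap\alpha = \delta\,\lap L^{n-p+1}d\alpha - \delta L^{n-p+1}\lap d\alpha = \delta\,[\lap, L^{n-p+1}]\,d\alpha,
\end{equation*}
so it suffices to show $[\lap, L^{n-p+1}]d\alpha = 0$.

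This last computation is the only substantive step. First I would derive the base commutator
\begin{equation*}
[\lap, L]\beta = 4(n-\deg\beta)\,L\beta - 2\epseta\, d\beta
\end{equation*}
by writing $\lap = d\delta + \delta d$, using $[d, L] = 0$ and Lemma~\ref{deltaLk} with $k=1$. On $\beta = d\alpha$ the $\epseta d$ term vanishes, and the same persists after replacing $\beta$ by any $L^j\beta$ because $L$ commutes with $d$. The telescoping identity $[\lap, L^{n-p+1}] = \sum_{j=0}^{n-p} L^j [\lap, L] L^{n-p-j}$ then gives
\begin{equation*}
[\lap, L^{n-p+1}](d\alpha) = -4\Biggl(\sum_{j=0}^{n-p}(n-p-2j)\Biggr)L^{n-p+1}(d\alpha) = 0,
\end{equation*}
as the arithmetic sum vanishes by pairing $j \leftrightarrow n-p-j$. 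The main obstacle is really spotting the compact expression $\aux_p = \delta L^{n-p+1}d$ on $\ker\delta$; after that, everything reduces to this short cancellation.
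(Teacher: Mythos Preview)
Your argument is correct, and it is genuinely different from the paper's proof. The key observation you make---that on $\ker\delta$ the auxiliary operator collapses to
\[
\aux_p\alpha = \delta L^{n-p+1}d\alpha,
\]
---is not in the paper; the authors work directly with the two summands of $\aux_p\alpha$ and verify $\delta\aux_p\alpha=0$ and $\lap\aux_p\alpha=\aux_p\lap\alpha$ by a rather long chain of computations using~\eqref{lapiphi}, \eqref{laplambda}, \eqref{liephisquare}, etc. Your route sidesteps all of that: once $\aux_p=\delta L^{n-p+1}d$ on $\ker\delta$ is established (a one-line application of Lemma~\ref{deltaLk} plus $\lie_\phi d = -d i_\phi d$), coclosedness is immediate from $\delta^2=0$, and the intertwining relation reduces to the vanishing of $[\lap,L^{n-p+1}]$ on closed forms, which is your clean telescoping computation. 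One bonus of your approach is that it never invokes the hypothesis $\lie_\xi\alpha=0$: only $\delta\alpha=0$ is used, so you have in fact proved a slightly stronger statement than the theorem as stated. By contrast, the paper's proof uses $\lie_\xi\alpha=0$ in several places (for instance to kill $\lie_\phi^2 d\alpha$ and $d\ixi d\alpha$). What the paper's longer computation buys is perhaps more transparency about \emph{where} the Sasakian identities enter, but your compact rewriting is both shorter and sharper.
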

\begin{proof}
	To check  that
	\begin{equation}
		\label{num3}
		\aux_p \alpha = (n-p+1) L^{n-p}di_\phi d\alpha + L^{n-p+1} \lap
		\alpha
		\end{equation}
	is coclosed, we will repeatedly use Lemma~\ref{deltaLk}.
We have
\begin{align}
	\label{deltaLkdiphidalpha0}
	\delta\left( L^{n-p} di_\phi d \alpha \right) &= \left[ \delta,
	L^{n-p} \right] d i_\phi d \alpha + L^{n-p} \delta d i_\phi d \alpha.
\end{align}
From Lemma~\ref{deltaLk} we get
\begin{align*}
	\left[ \delta, L^{n-p} \right]d i_\phi d \alpha =	-(n-p) L^{n-p-1} \lie_\phi d i_\phi d \alpha,
\end{align*}
since the second summand in \eqref{eqdeltaLk} vanishes in this case due to the
degree of $\alpha$. Moreover,  by using \eqref{liephisquare} we have
$\lie_\phi d i_\phi d \alpha=\lie_\phi^2 d\alpha = -2L d\lie_\xi \alpha=0$.
Therefore we get
\begin{equation}
	\label{num2}
	\left[ \delta, L^{n-p} \right] di_\phi d\alpha = 0.
\end{equation}
	Now
we compute the second summand of~\eqref{deltaLkdiphidalpha0}. We have
\begin{equation}
	\label{deltadiphidalpha}
\delta d i_\phi d\alpha = \lap i_\phi d\alpha - d \delta i_\phi d \alpha.
\end{equation}
From formula~\eqref{lapiphi} for $\left[ \lap,i_\phi \right]$ we get
\begin{equation}
	\label{lapiphidalpha}
	\begin{aligned}
	\lap i_\phi d \alpha & = - 2 \left( \lie_\xi - \ixi d + \epseta \delta \right) d\alpha + i_\phi d
\lap\alpha
\\&= - 2 \epseta \delta d\alpha + i_\phi d \lap \alpha\\& = -2 \epseta \lap\alpha + i_\phi d \lap
\alpha,
\end{aligned}
\end{equation}
since $\alpha\in \Omega^{p-1}_{\lie_\xi, \delta}(M)$
implies $\delta d \alpha = \lap \alpha$.
	Now we compute the second summand of~\eqref{deltadiphidalpha}. From ~\eqref{laplambda} we get
	\begin{equation}
		\label{num1}
		\begin{aligned}
		d\delta i_\phi d\alpha &= d \left[ \delta, i_\phi \right]
		d\alpha  + di_\phi
	\delta d\alpha
\\&= -\frac12 d\left[\lap,\ixi  \right]d\alpha - 2(n-(p-1))d \ixi d\alpha +
di_\phi \lap \alpha.
\end{aligned}
\end{equation}
		Note that $d\left[ \lap,\ixi \right] d \alpha=-\left[ \lap,\ixi \right] d^2 \alpha=0$, since
\begin{equation*}
	\left\{ d, \left[ \lap,\ixi \right] \right\} = \left\{ \left[ d,\lap
	\right], \ixi
	\right\} + \left[ \lap, \lie_\xi \right] = 0.	
\end{equation*}
Also $d\ixi d \alpha = -d^2
\ixi \alpha + d\lie_\xi\alpha =0$. Thus~\eqref{num1} becomes
\begin{equation}
	\label{ddeltaiphidalpha}
	d\delta i_\phi d\alpha = d i_\phi \lap\alpha.
\end{equation}
Substituting~\eqref{lapiphidalpha} and~\eqref{ddeltaiphidalpha}
in~\eqref{deltadiphidalpha}, we get
\begin{equation}
	\label{deltadiphidalpha2}
	\delta d i_\phi d\alpha = -2\epseta \lap \alpha + i_\phi d \lap \alpha - di_\phi
	\lap \alpha = -2\epseta \lap \alpha + \lie_\phi \lap\alpha.
\end{equation}
Thus, in view of~\eqref{num2}, the formula~\eqref{deltaLkdiphidalpha0} becomes
\begin{equation}
	\label{deltaLkdiphidalpha}
	\delta\left( L^{n-p}di_\phi d\alpha \right) =
	-2\epseta L^{n-p}\lap \alpha + L^{n-p}\lie_\phi \lap\alpha.
\end{equation}
Now we compute the  value of $\delta$ on
the second summand  of $\aux_p\alpha$.
 Since $\delta\lap
\alpha = \lap \delta\alpha = 0$,
from Lemma~\ref{deltaLk} it follows that
\begin{equation}
	\label{deltaLklapalpha}
\begin{aligned}
	\delta\left( L^{n-p+1}\lap\alpha \right) &= \left[ \delta, L^{n-p+1}
	\right]\lap\alpha\\
 &=
		-(n-p+1)L^{n-p} \lie_\phi \lap \alpha
		\\&\phantom{=\ }	+2(n-p+1) \epseta L^{n-p} \lap \alpha
		\\&= -(n-p+1)(L^{n-p} \lie_\phi \lap \alpha - 2 \epseta L^{n-p}
		\lap\alpha).
	\end{aligned}
\end{equation}
From~\eqref{num3}, \eqref{deltaLkdiphidalpha} and~\eqref{deltaLklapalpha}, we get that
$\delta \aux_p \alpha = 0$, that is $\aux_p\alpha$ is coclosed.

Now we will prove that  $\lap \aux_p\alpha  = \aux_p \lap \alpha$. Since $\aux_p\alpha $ is coclosed, we have
$\lap \aux_p\alpha = \delta d \aux_p \alpha$.
As $d$ commutes with $L$, we get
from~\eqref{num3} that
\begin{equation*}
	d\aux_p\alpha = L^{n-p+1}d\lap \alpha.
\end{equation*}
Thus
\begin{equation*}
\lap \aux_p \alpha = 	\delta d\aux_p \alpha = \left[ \delta, L^{n-p+1} \right] d\lap \alpha +
	L^{n-p+1} \delta d\lap \alpha.
\end{equation*}
By Lemma~\ref{deltaLk} we get
\begin{align*}
	\left[ \delta, L^{n-p+1} \right] d \lap \alpha =
	-(n-p+1)L^{n-p} \lie_\phi d \lap \alpha = (n-p+1) L^{n-p} d i_\phi d\lap \alpha.
\end{align*}
Next,
\begin{equation*}
	L^{n-p+1}\delta d \lap \alpha =L^{n-p+1} \lap \delta d\alpha =L^{n-p+1}
	\lap^2 \alpha.
\end{equation*}
Thus,
\begin{equation*}
	\lap \aux_p \alpha = (n-p+1) L^{n-p} di_\phi d \lap \alpha +
	L^{n-p+1} \lap^2 \alpha = \aux_p \lap \alpha.
\end{equation*}
\end{proof}
\begin{corollary}
	\label{auxalphazero}
	Let $\left( M, \eta,g \right)$ be a compact Sasakian manifold of dimension $2n+1$ and
	$\alpha\in \Omega^{p-1}_{\lie_\xi, \delta}(M)$ with $p\le n$,  such that
	${L^{n-p+1}d\lap \alpha = 0}$.
	Then $\aux_p \alpha = 0$.
\end{corollary}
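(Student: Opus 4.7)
The plan is to show that $\aux_p\alpha$ is simultaneously harmonic and coexact, so that it must vanish by the orthogonality part of the Hodge decomposition on a compact Riemannian manifold.

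First, I would verify that $\aux_p\alpha$ is harmonic. Coclosedness is already supplied by Theorem~\ref{aux}. For closedness, a direct computation from the definition of $\aux_p$ uses only $dL=Ld$ and $d^2=0$: the first summand of $\aux_p\alpha$ is annihilated by $d$, while the second produces $L^{n-p+1}d\lap\alpha$, which is exactly the hypothesis.

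Next, I would rewrite $\aux_p\alpha$ as a coexact form. Since $\delta\alpha=0$, we have $\lap\alpha=\delta d\alpha$, and hence $L^{n-p+1}\lap\alpha = L^{n-p+1}\delta d\alpha$. Applying Lemma~\ref{deltaLk} with $k=n-p+1$ to move $L^{n-p+1}$ past $\delta$ on the form $d\alpha$ of degree $p$, I rely on the crucial observation that the $\epseta$-correction term disappears, because its scalar coefficient $n-\deg-(k-1)$ evaluates to $n-p-(n-p)=0$. Combining this with the identity $\lie_\phi d\alpha = -di_\phi d\alpha$, which follows from $\lie_\phi = i_\phi d - d i_\phi$ together with $d^2=0$, I obtain
\begin{equation*}
L^{n-p+1}\lap\alpha = \delta\bigl(dL^{n-p+1}\alpha\bigr) - (n-p+1)\,L^{n-p}di_\phi d\alpha.
\end{equation*}
Substituting this into the definition of $\aux_p\alpha$ produces an exact cancellation of the two $L^{n-p}di_\phi d\alpha$ contributions, leaving $\aux_p\alpha = \delta(dL^{n-p+1}\alpha)$, which is manifestly coexact.

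Finally, a harmonic form that is also coexact must vanish: if $\omega=\delta\tau$ with $d\omega=0$, then $(\omega,\omega)=(\delta\tau,\omega)=(\tau,d\omega)=0$. The main obstacle I expect is the second step: that the two $L^{n-p}di_\phi d\alpha$ contributions in $\aux_p\alpha$ conspire to cancel is not at all visible from the definition, and it rests on the fortuitous vanishing of the scalar correction in Lemma~\ref{deltaLk} at precisely the degree at hand. Once this cancellation is in place, however, the conclusion follows immediately from Hodge theory.
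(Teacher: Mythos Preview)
Your proof is correct and takes a genuinely different route from the paper.  The paper, after establishing that $\aux_p\alpha$ is harmonic, invokes the Green operator: it shows $G\alpha\in\Omega^{p-1}_{\lie_\xi,\delta}(M)$, uses the commutation $\lap\aux_p=\aux_p\lap$ from Theorem~\ref{aux} to get $\lap\aux_p G\alpha=\aux_p\alpha$, and then concludes via self-adjointness of $\lap$ that a harmonic form in the image of $\lap$ vanishes.  You instead show $\aux_p\alpha$ is coexact by a direct algebraic computation, exploiting the pleasant degree coincidence in Lemma~\ref{deltaLk} that kills the $\epseta$-correction on $d\alpha$ exactly when $k=n-p+1$.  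Your approach is more elementary: it avoids the Green operator and Proposition~\ref{greenliexi} entirely, and uses only the coclosedness half of Theorem~\ref{aux} (in fact not even that, since $\aux_p\alpha=\delta(dL^{n-p+1}\alpha)$ already gives $\delta\aux_p\alpha=0$).  The paper's route, on the other hand, makes transparent why the equation~\eqref{num4} is worth recording, and generalises more readily if one wanted to iterate.  A minor remark: your argument shows that the hypothesis $\lie_\xi\alpha=0$ is in fact not needed for the conclusion, only $\delta\alpha=0$ and $L^{n-p+1}d\lap\alpha=0$.
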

\begin{proof}
It is easy to check that $\aux_p\alpha$ is closed.
It is also coclosed by Theorem~\ref{aux}.
Therefore $\aux_p\alpha$ is  harmonic.
Now we consider the form $\aux_p G\alpha$. It follows from~\eqref{greenoperator}
and
Proposition~\ref{greenliexi} that $G\alpha\in
\Omega^{p-1}_{\lie_\xi,\delta}(M)$. By Theorem~\ref{aux}, we have
\begin{equation*}
\lap \aux_p G \alpha = \aux_p \lap G \alpha = \aux_p \alpha - \aux_p \pr \alpha.
\end{equation*}
It is immediate from the definition of $\aux_p$ that $\aux_p\omega=0$ for any
harmonic $(p-1)$-form $\omega$, in particular for $\pr \alpha$.
Thus $\lap \aux_p G\alpha = \aux_p \alpha$. As $\aux_p \alpha$
is harmonic, we obtain
\begin{equation*}
0 = (\lap \aux_p \alpha, \aux_p G \alpha) = \left( \aux_p  \alpha, \lap
\aux_p G \alpha\right) = (\aux_p \alpha, \aux_p \alpha). 	
\end{equation*}
Thus $\aux_p \alpha = 0$.
\end{proof}

Now we will prove the main result of the article.

\begin{theorem}
	\label{main2}
	Let $\left( M,\eta,g \right)$ be
	a compact Sasakian manifold, $p\le n$, and  $\beta$ a closed $p$-form.
For any  $\beta'\in \left[ \beta \right]$ such that
\begin{equation}
	\label{num0}
	\begin{aligned}
			\ixi \beta' &= 0,  &
			L^{n-p+1}\beta' &= 0 ,
		\end{aligned}
	\end{equation}
	we have $\lef_p([\beta])=\left[ \epseta L^{n-p}\beta' \right]$. 	
In particular, the Lefschetz map $\lef_p$ does not depend on the choice
of a compatible Sasakian metric on $(M,\eta)$.
\end{theorem}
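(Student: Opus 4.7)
The plan is to build on the reduction outlined just before the theorem: since both $\beta'$ and $\pr\beta$ represent $[\beta]$, it suffices to prove that $\epseta L^{n-p}(\beta' - \pr\beta)$ is exact whenever $\beta'$ satisfies~\eqref{num0}. A preliminary observation is that $\pr\beta$ itself satisfies~\eqref{num0}: $\ixi\pr\beta = 0$ by Proposition~\ref{tachi}, and since $\epseta L^{n-p}\pr\beta$ is harmonic (hence closed) by Theorem~\ref{lefschetz}, expanding $d(\epseta L^{n-p}\pr\beta)$ via~\eqref{dl} forces $L^{n-p+1}\pr\beta = 0$. In particular $\lef_p([\beta]) = [\epseta L^{n-p}\pr\beta]$ is consistent with what is to be proved.

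Next I would write $\beta' - \pr\beta = d\alpha$ with $\alpha := \delta G \beta'$, a $(p-1)$-form. Using $\lie_\xi \beta' = d\ixi\beta' + \ixi d\beta' = 0$, together with Proposition~\ref{greenliexi} and $\delta G = G\delta$, one checks that $\alpha \in \Omega^{p-1}_{\lie_\xi, \delta}(M)$. A Leibniz computation with $d\eta = 2\Phi$, using that $d$ commutes with $L$, gives
\[
\epseta L^{n-p} d\alpha \;=\; 2L^{n-p+1}\alpha \;-\; d\bigl(\epseta L^{n-p}\alpha\bigr),
\]
reducing the task to proving that $L^{n-p+1}\alpha$ is exact. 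This form is automatically closed: $d L^{n-p+1}\alpha = L^{n-p+1} d\alpha = L^{n-p+1}(\beta' - \pr\beta) = 0$ by~\eqref{num0} and the identity from the previous paragraph.

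The technical core is then to invoke Corollary~\ref{auxalphazero} on the $(p-1)$-form $G\alpha$. It again lies in $\Omega^{p-1}_{\lie_\xi, \delta}(M)$ by the same considerations, and the required hypothesis $L^{n-p+1} d\lap G\alpha = 0$ reduces via $d\lap G\alpha = d\alpha = \beta' - \pr\beta$ to the identity already verified. The conclusion $\aux_p G\alpha = 0$ rearranges to
\[
L^{n-p+1}(\alpha - \pr\alpha) \;=\; -(n-p+1)\, d\bigl(L^{n-p} i_\phi d G\alpha\bigr),
\]
so the non-harmonic part of $L^{n-p+1}\alpha$ is exact. For the harmonic remainder $L^{n-p+1}\pr\alpha$, Proposition~\ref{characterization}(i) places $\pr\alpha$ in $\Omega^{p-1,0}_{d,\ixi,\epseta\delta}(M)$, and iterating the chain~\eqref{chain} with $p$ replaced by $p-1$ lands $L^{n-p+1}\pr\alpha$ in $\Omega^{2n-p+1,\, 4(n-p+1)}_{d,\ixi,\epseta\delta}(M)$; this form is closed with strictly positive $\lap$-eigenvalue $4(n-p+1)$, so it coincides with $(4(n-p+1))^{-1} d\delta L^{n-p+1}\pr\alpha$ and is therefore exact. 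Adding the two pieces, $L^{n-p+1}\alpha$ is exact, which closes the reduction.

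Metric independence is then immediate: by~\eqref{Lkprprimebeta}, $\prprime\beta$ satisfies~\eqref{num0} for any other compatible Sasakian metric $g'$, and applying the identity just proved with $\beta' = \prprime\beta$ gives $\lef_p^{g'}([\beta]) = [\epseta L^{n-p}\prprime\beta] = \lef_p^{g}([\beta])$. I anticipate the main obstacle to be the interlocking bookkeeping needed to apply Corollary~\ref{auxalphazero}: one must route the identity $L^{n-p+1}\pr\beta = 0$ through the chain to conclude $L^{n-p+1} d\lap G\alpha = 0$, and simultaneously verify that both $\alpha$ and $G\alpha$ retain the auxiliary regularity of belonging to $\Omega^{p-1}_{\lie_\xi, \delta}(M)$.
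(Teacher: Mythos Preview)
Your proposal is correct and follows essentially the same route as the paper: your $\alpha=\delta G\beta'$ coincides with the paper's $\gamma=\delta G(\beta'-\pr\beta)$ (since $G\pr\beta=0$), and your $G\alpha$ is the paper's $\alpha$, so the application of Corollary~\ref{auxalphazero} matches exactly. One simplification you overlooked: because $\alpha$ is in the image of $\delta$, it is orthogonal to harmonic forms, so $\pr\alpha=0$; hence $\lap G\alpha=\alpha$ on the nose, and your separate argument showing that $L^{n-p+1}\pr\alpha$ is exact---while correct---is unnecessary.
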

\begin{proof}
Let us define $\gamma = \delta G \left( \beta' - \pr \beta \right)$. Then,
since $d$ and $\delta$ commute with $G$ by~\eqref{greenoperator} and $\beta'$ is closed, we get
\begin{equation}
	\label{dgamma}
	\begin{aligned}
d\gamma &= G d\delta \left( \beta' -
\pr \beta
\right) \\& = G d\delta \beta' = G \lap \beta' = \beta' - \pr\beta' = \beta' - \pr
\beta.
\end{aligned}
\end{equation}
As $\lef_p([\beta])=[\epseta L^{n-p} \pr \beta]$, we have to prove
that $\epseta L^{n-p} \beta'$ and $\epseta L^{n-p} \pr \beta$ are in
the same cohomology class.
Since
\begin{equation*}
	\epseta L^{n-p} d \gamma = \epseta L^{n-p} \beta' - \epseta
	L^{n-p}\pr\beta,
\end{equation*}
it is enough to show that $\epseta L^{n-p} d \gamma$ is exact.
Now, as
\begin{equation*}
	d\left( \epseta L^{n-p}\gamma \right) = 2L^{n-p+1} \gamma - \epseta L^{n-p}d\gamma,
\end{equation*}
the form $\epseta L^{n-p}d\gamma$ is exact if and only if $L^{n-p+1} \gamma$ is exact.

Let $\alpha = G\gamma$, that is $\alpha =  \delta G^2 (\beta' - \pr\beta)$.
We will now check that $\alpha$ satisfies the hypotheses of
Corollary~\ref{auxalphazero}, namely that $\lie_\xi\alpha$, $ \delta \alpha$,
and
$L^{n-p+1}d\lap\alpha$ are zero.
Since $\lie_\xi$ commutes with $\delta$ by Theorem~\ref{commutators} and with
$G$ by Proposition~\ref{greenliexi}, we get
\begin{equation*}
	\lie_\xi \alpha = \delta G^2 \lie_\xi\left( \beta'-\pr\beta \right).
\end{equation*}
As $\pr\beta$ is harmonic and $\xi$ is Killing, we have $\lie_\xi \pr\beta =0$
by \cite[Theorem~3.7.1]{goldberg}. Moreover, $\beta'$ is
closed and $i_\xi\beta'=0$ by assumption. Thus $\lie_\xi \beta'=0$. Therefore we
get
that $\lie_\xi \alpha =0$.
As
$\alpha$ is in the image of $\delta$,
we also have $\delta \alpha =0$.
Now
\begin{equation}\label{lapalfagamma}
	\lap \alpha = \lap G\gamma = \gamma - \pr\gamma =\gamma,
\end{equation}
as $\gamma$ is coexact and this implies $\pr\gamma =0$.
Thus by~\eqref{num0} and~\eqref{dgamma}, we get
\begin{equation*}
	L^{n-p+1}d\lap \alpha = L^{n-p+1}d\gamma =
	L^{n-p+1} \beta' - L^{n-p+1} \pr \beta = - L^{n-p+1}\pr\beta.
\end{equation*}
By Theorem~\ref{lefschetz} we know that $\epseta L^{n-p}\pr\beta$ is harmonic, therefore
\begin{equation*}
	L^{n-p+1}\pr \beta = \frac 12 d\left( \epseta L^{n-p} \pr \beta \right) =0.
\end{equation*}
We conclude that $L^{n-p+1} d\lap \alpha =0$.
Thus all conditions of Corollary~\ref{auxalphazero} are satisfied for $\alpha$.
 Therefore $\aux_p\alpha=0$ and thus by \eqref{lapalfagamma} we get that
\begin{align*}
	L^{n-p+1}	\gamma& =L^{n-p+1} \lap \alpha = - (n-p+1) L^{n-p} d
	i_\phi d \alpha\\& =
	d\left(  -(n-p+1)L^{n-p} i_\phi d \alpha \right)
\end{align*}
is an exact form.
\end{proof}

\begin{corollary}
Let $(M, \eta)$ be a compact contact manifold of dimension $2n+1$. Suppose $g$ and
	$g'$ are two different Sasakian metrics on $M$, both compatible with $\eta$. Then, for any
	closed $p$-form $\beta$ with $p\le n$, it holds $\left[ \epseta L^{n-p} \pr\beta \right] =
	\left[ \epseta L^{n-p}\prprime \beta \right]$.
\end{corollary}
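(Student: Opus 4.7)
The plan is to deduce the corollary as a direct application of Theorem~\ref{main2}, taking $\beta'$ to be the $g'$-harmonic representative $\prprime\beta$ of the cohomology class $[\beta]$. The key observation is that although the projections $\pr$ and $\prprime$ depend on the chosen metric, the operators $L=\eps_\Phi$ and $\epseta$ depend only on the contact form $\eta$, since $\Phi=\frac12 d\eta$. Thus $\epseta L^{n-p}\pr\beta$ and $\epseta L^{n-p}\prprime\beta$ are both well-formed expressions in $\Omega^{2n+1-p}(M)$, and it makes sense to compare their de Rham cohomology classes.

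First I would verify that $\prprime\beta$ qualifies as an admissible $\beta'$ for Theorem~\ref{main2}. Since $\beta$ is closed, the Hodge decomposition with respect to $g'$ gives $\beta-\prprime\beta \in d\Omega^{p-1}(M)$, so $\prprime\beta\in[\beta]$. To check the conditions \eqref{num0}: since $\prprime\beta$ is harmonic for $(M,\eta,g')$ and has degree $p\le n$, Proposition~\ref{tachi}(i) (applied to the Sasakian structure $(M,\eta,g')$) yields $\ixi\prprime\beta=0$. For the second condition, I would reproduce the argument of equation~\eqref{Lkprprimebeta}: since $d$ commutes with $L$ and $\{d,\epseta\}=2L$, we have
\begin{equation*}
2L^{n-p+1}\prprime\beta = d\bigl(\epseta L^{n-p}\prprime\beta\bigr) - \epseta L^{n-p} d\prprime\beta = d\bigl(F_p\prprime\beta\bigr),
\end{equation*}
and $F_p\prprime\beta$ is harmonic with respect to $g'$ by Theorem~\ref{lefschetz}, hence closed; so $L^{n-p+1}\prprime\beta=0$.

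Having verified both hypotheses, Theorem~\ref{main2} applied to the Sasakian structure $(M,\eta,g)$ with $\beta'=\prprime\beta$ gives
\begin{equation*}
\bigl[\epseta L^{n-p}\prprime\beta\bigr] = \lef_p([\beta]) = \bigl[\epseta L^{n-p}\pr\beta\bigr],
\end{equation*}
where the second equality is the definition~\eqref{lefp} of $\lef_p$ for the metric $g$. This yields the desired equality.

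There is no real obstacle here beyond noticing that the two conditions on $\beta'$ featuring in Theorem~\ref{main2} are exactly the two properties that any $g'$-harmonic representative automatically satisfies, once one invokes Proposition~\ref{tachi} (for the vanishing of $\ixi$) and the degree considerations coming from Theorem~\ref{lefschetz} applied to the second Sasakian structure (for the vanishing of $L^{n-p+1}$). The statement is then a clean metric-independence consequence of the more substantial Theorem~\ref{main2}.
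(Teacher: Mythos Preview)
Your proof is correct and follows essentially the same approach as the paper: verify that $\prprime\beta$ satisfies the hypotheses of Theorem~\ref{main2} via Proposition~\ref{tachi} and equation~\eqref{Lkprprimebeta}, then apply the theorem with respect to the metric $g$. One minor slip: in your display the sign in front of $\epseta L^{n-p} d\prprime\beta$ should be $+$ (from $\{d,\epseta\}=2L$), but this is harmless since that term vanishes anyway.
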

\begin{proof}
We obviously have $\prprime\beta\in \left[ \beta \right]$.
By Proposition~\ref{tachi} applied to the Sasakian manifold $(M,\eta, g')$, we
get
$\ixi \prprime \beta = 0$. Next by~\eqref{Lkprprimebeta} we have
$L^{n-p+1} \prprime \beta=0$.
Thus the form $\beta' = \prprime \beta$ satisfies all the conditions of
Theorem~\ref{main2} for the Sasakian manifold $(M,\eta, g)$.
Therefore $\left[ \epseta L^{n-p} \prprime \beta \right] =\lef_p([\beta])=
	\left[ \epseta L^{n-p}\pr \beta \right]$.
\end{proof}

\section{Lefschetz contact manifolds}

Let $(M,\eta)$ be a compact contact manifold of dimension $2n+1$
such that there exists a Sasakian metric on $M$ compatible with the contact
form $\eta$.
Then by choosing an arbitrary such metric
$g$, we can define the maps $\lef_p$ for $p\le n$ as
in~\eqref{lefp} and these maps are isomorphisms.

Note that there is no obvious way to define  similar maps between cohomology
spaces of a general contact manifold $(M,\eta)$ of dimension $2n+1$.
To introduce the notion of  Hard Lefschetz property for a contact manifold, we define
the \emph{Lefschetz relation} between cohomology groups $H^p(M)$ and
$H^{2n+1-p}(M)$ of $(M, \eta)$ to be
\begin{equation*}
	\rel_{\lef_p} = \left\{ \left( [ \beta ], [ \epseta L^{n-p}\beta
	] \right) \middle| \beta \in \Omega^p(M),\ d\beta = 0,\ \ixi\beta =0,\
	L^{n-p+1}\beta =0 \right\}.	
\end{equation*}
Thus if $(M,\eta)$ admits a compatible Sasakian metric,  from
Theorem~\ref{main2} it
follows that $\rel_{\lef_p}$ is the graph of the isomorphism $\lef_p$.
This justifies the following definition.
\begin{definition}
	We say that a compact contact manifold $(M,\eta)$ has the
	\emph{Hard Lefschetz property} if for every $p\le n$ the relation $\rel_{\lef_p}$ is the graph of an
	isomorphism $\lef_p: H^p(M)\longrightarrow H^{2n+1-p}(M)$.
	Such manifolds will be called \emph{Lefschetz contact manifolds}.
\end{definition}

There is a simple extension to Lefschetz contact manifolds of the well-known property that  the odd Betti numbers $b_{2k+1}$ ($0\leq 2k+1 \leq n$) of compact Sasakian manifolds are even (\cite{fujitani}).

\begin{theorem}
	\label{oddbettieven}
	Let $\left( M,\eta\right)$ be a Lefschetz contact manifold of dimension
$2n+1$. Then  for every  odd $p \le n$ the Betti number $b_{p}$ is
even.

\end{theorem}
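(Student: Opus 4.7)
The plan is to mimic the classical argument used for compact Kähler and Sasakian manifolds up to the middle dimension: construct, for each odd $p\le n$, a non-degenerate skew-symmetric bilinear form on $H^p(M)$. Since a finite-dimensional real vector space admitting such a form necessarily has even dimension, this forces $b_p=\dim H^p(M)$ to be even.

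Explicitly, I would define
\begin{equation*}
B\colon H^p(M)\times H^p(M)\to \mathbb{R},\qquad B([\alpha],[\beta])=\int_M \alpha\wedge \epseta L^{n-p}\beta,
\end{equation*}
where $\beta$ is taken to be a representative of $[\beta]$ satisfying $d\beta=0$, $\ixi\beta=0$, and $L^{n-p+1}\beta=0$. Such a representative exists precisely because $(M,\eta)$ is Lefschetz: by the very definition, $\rel_{\lef_p}$ is the graph of a function defined on all of $H^p(M)$, so every class admits a representative with these properties. A direct computation using $\{d,\epseta\}=2L$ together with the fact that $d$ commutes with $L$ yields $d(\epseta L^{n-p}\beta)=2L^{n-p+1}\beta=0$, so $\epseta L^{n-p}\beta$ is closed and represents $\lef_p[\beta]\in H^{2n+1-p}(M)$.

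Well-definedness of $B$ on cohomology is automatic from the Lefschetz hypothesis, since the class $\lef_p[\beta]$ depends only on $[\beta]$. Denoting the Poincaré duality pairing by $\langle \cdot,\cdot\rangle_{\mathrm{PD}}$, we may rewrite
\begin{equation*}
B([\alpha],[\beta])=\langle\, [\alpha],\,\lef_p[\beta]\,\rangle_{\mathrm{PD}},
\end{equation*}
whence non-degeneracy of $B$ is immediate: the Poincaré pairing is non-degenerate and $\lef_p$ is an isomorphism. Skew-symmetry for odd $p$ is a short sign computation: writing $\omega=\eta\wedge (d\eta)^{n-p}$, a form of odd degree $2(n-p)+1$, graded commutativity yields $\beta\wedge\omega\wedge\alpha=(-1)^p\,\alpha\wedge\omega\wedge\beta$, which equals $-\alpha\wedge\omega\wedge\beta$ when $p$ is odd. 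Integrating, $B([\beta],[\alpha])=-B([\alpha],[\beta])$.

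Combining these three properties of $B$ with the linear-algebra fact about alternating forms completes the proof. The only real subtlety is the well-definedness of $B$; this is precisely where the Lefschetz hypothesis does its work, guaranteeing both the existence of suitable representatives $\beta$ and that the cohomology class $[\epseta L^{n-p}\beta]$ is independent of the chosen representative. Once this is in place, the remainder is just Poincaré duality and a sign computation.
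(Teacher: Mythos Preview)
Your argument is correct and matches the paper's own proof: define the bilinear form $B(x,x')=\int_M \lef_p(x)\smile x'$ on $H^p(M)$, obtain non-degeneracy from Poincar\'e duality together with the isomorphism $\lef_p$, and read off $(-1)^p$-symmetry from graded commutativity of the wedge product on representatives satisfying $i_\xi=0$ and $L^{n-p+1}=0$. One small point worth making explicit is that in the skew-symmetry step you should also take $\alpha$ to be such a special representative (which you may, since the pairing depends only on $[\alpha]$), so that $\int_M \beta\wedge\omega\wedge\alpha$ genuinely computes $B([\beta],[\alpha])$; the paper does exactly this by choosing both $\omega\in x$ and $\omega'\in x'$ with those properties.
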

\begin{proof}
Let $p\leq n$. Since $\lef_p$ is an isomorphism, using  Poincar\'{e} duality we can define a nondegenerate
bilinear form $B$ on the de Rham cohomology vector space
$H^{p}(M)$ by putting
$$B(x,x')=\int_M \lef_p(x)\smile x'.$$
Note that
\begin{equation}\label{bilinear}
B(x,x')=\int_M (\epseta L^{n-p}\omega)\wedge\omega'=\int_M \eta\wedge\Phi^{n-p}\wedge\omega\wedge\omega',
\end{equation}
where $\omega\in x$ and $\omega'\in x'$ are closed $p$-forms such that $\ixi\omega =\ixi\omega '=0$ and
	$L^{n-p+1}\omega =L^{n-p+1}\omega'=0$. Such $\omega$ and $\omega'$ always exist since $\rel_{\lef_p}$ is the graph of a map.
Now, \eqref{bilinear} implies that $B(x,x')=(-1)^pB(x',x)$.
It follows that, when $p$ is odd, the vector space $H^{p}(M)$ is even dimensional.
\end{proof}

It would be interesting to find a characterization of Lefschetz contact manifolds
in the spirit of~\cite{brylinski,mathieu}.
It would be also interesting to find explicit examples of Lefschetz contact manifolds  which
do not admit any Sasakian structure. We will address these matters in our future research.

{\bf Note added.} After the paper was submitted, Yi Lin has considered some of the
above questions in~\cite{lin}.
\bibliography{hard}
\bibliographystyle{amsplain}

\end{document}